\journalname{JOTA}
\newtheorem{assumption}[theorem]{Assumption}
\begin{document}

\title{Adjusted Shuffling SARAH: Advancing Complexity Analysis via Dynamic Gradient Weighting}


\author{Duc Toan Nguyen \and Trang H. Tran \and Lam M. Nguyen}

\institute{Duc Toan Nguyen \at
             Rice University \\
              Houston, TX, USA\\
              duc.toan.nguyen@rice.edu
           \and
              Trang H. Tran \at
              Lehigh University \\
              Bethlehem, PA, USA\\
              hht320@lehigh.edu
            \and 
              Lam M. Nguyen \at
              Thomas J. Watson Research Center, IBM Research\\ 
              Yorktown Heights, NY, USA\\
              LamNguyen.MLTD@ibm.com       \\
              \\
Corresponding author: Lam M. Nguyen.
}   

\date{}

\maketitle

\begin{abstract}
In this paper, we propose Adjusted Shuffling SARAH, a novel algorithm that integrates shuffling strategies into the recursive SARAH framework using a dynamic weighting mechanism to enhance exploration. We analyze the algorithm under two operating modes. First, we show that the Exact Mode matches the best-known theoretical guarantees for shuffling variance-reduced methods in both strongly convex and non-convex settings. Second, to address large-scale regimes, we introduce an Inexact Mode that utilizes mini-batch estimators. A key contribution of our work is proving that this Inexact Mode achieves a total complexity independent of the dataset size, making it significantly more scalable than existing shuffling methods when the sample size is large.
\end{abstract}
\keywords{Variance reduction \and Stochastic gradient method \and Shuffling sampling}
\text{}



\section{Introduction}\label{sec_intro}

In this paper, we investigate the following finite sum minimization problem:
\begin{align}\label{prb:main}
    \min_{w\in \mathbb{R}^d} \left \lbrace P(w) \coloneq \dfrac{1}{n} \sum_{i=1}^n f_i(w)\right \rbrace,
\end{align}
where each $f_i: \mathbb{R}^d \to \mathbb{R}$ is smooth, for $i \in [n] := \lbrace 1, \ldots, n \rbrace$. Problem~(\ref{prb:main}) 
appears in many convex and nonconvex problems in machine learning, such as logistic regression \cite{cox1958regression}, multi-kernel learning \cite{bach2004multiple}, and neural networks \cite{hastie2009elements}. For example, given a training set $\lbrace (x_i,y_i) \rbrace^n_{i=1}$ with $x_i \in \mathbb{R}^d $ and $ y_i \in \lbrace -1,1\rbrace$, we formalize the $l_2$-regularized logistic regression for the binary classification problem as (\ref{prb:main}) with each function $f_i(w) := \log(1+ \exp(-y_ix_i^Tw))+\frac{\lambda}{2}\lVert w \rVert^2$. 

One of the most fundamental methods for solving Problem~(\ref{prb:main}) is Gradient Descent (GD). In each iteration $t$, GD updates the variable $w_t$ in the direction of the negative gradient $-\nabla P(w_t)$, i.e.,
$$w_{t+1} = w_t - \eta \nabla P(w_t),$$
where $\eta > 0$ is the learning rate. However, in modern machine learning applications, the number of components $n$ is typically very large. Consequently, computing the full gradient $\nabla P(w_t)$ requires a pass over the entire dataset, making deterministic methods like GD computationally prohibitive \cite{bottou2018optimization,sra2011optimization}. To address this scalability issue, Stochastic Gradient Descent (SGD) \cite{robbins1951stochastic} has become the standard alternative. At each iteration, SGD samples an index $i \in [n]$ uniformly at random and performs the update:
$$w_{t+1} = w_t - \eta \nabla f_i(w_t).$$
By using a single component gradient $\nabla f_i(w_t)$ as a stochastic approximation of the full gradient, SGD reduces the per-iteration computational cost from $\mathcal{O}(n)$ to $\mathcal{O}(1)$. However, this computational efficiency comes at a price: the stochastic gradient $\nabla f_i(w_t)$ introduces significant variance, forcing the algorithm to use diminishing step sizes and resulting in a sub-linear convergence rate.
\vspace{-0.2cm}
\paragraph{\textbf{Variance Reduction.}} 
To overcome the limitations of high variance in SGD, Variance Reduction (VR) methods were developed. These algorithms effectively reduce the variance of the stochastic gradient estimator as optimization proceeds, allowing for constant step sizes and recovering the fast linear convergence rate of GD. Prominent examples include SAG/SAGA~\cite{roux2012stochastic, defazio2014saga}, SVRG~\cite{johnson2013accelerating}, and SARAH~\cite{nguyen2017sarah}. Among these, SVRG \cite{johnson2013accelerating} is a standard baseline method. It employs a double-loop structure where a snapshot of the full gradient $\nabla P(\Tilde{w})$ is computed at the start of each outer loop. The inner update is given by $w_{t+1} = w_t - \eta v_t$, using the estimator:
$$v_t = \nabla f_{i_t}(w_t) - \nabla f_{i_t}(\Tilde{w})+\nabla P (\Tilde{w}).$$
Our work builds upon SARAH \cite{nguyen2017sarah}, which utilizes a similar double-loop structure but employs a \textit{recursive} estimator to achieve better stability:
$$v_t = \nabla f_{i_t}(w_{t}) - \nabla f_{i_t}(w_{t-1})+v_{t-1}.$$
Despite their faster convergence, standard VR methods like SVRG and SARAH suffer from the same scalability bottleneck as GD: they require computing the full gradient $\nabla P(\Tilde{w})$ at the beginning of every outer loop. For massive datasets where $n$ is very large, this $\mathcal{O}(n)$ cost is prohibitive. To address this, Inexact SARAH (iSARAH) \cite{nguyen2021inexact} replaces the full gradient with a mini-batch approximation $v_0 = \frac{1}{|S|} \sum_{i\in S} \nabla f_i(\Tilde{w})$. This modification effectively decouples the computational complexity from the dataset size $n$, an important scalability feature that motivates the \textbf{Inexact Mode} in our proposed framework.
\vspace{-0.2cm}
\paragraph{\textbf{Shuffling-Based Methods.}} Another type of SGD variant, which has been actively studied and developed by researchers, is shuffling SGD. Original SGD samples a function $f_i$ randomly at each iteration, i.e., uses the \emph{sampling with replacement} strategy, while shuffling SGD uses the \emph{sampling without replacement} strategy. With shuffling techniques, we sample each data point exactly once at each epoch. In practice, shuffling strategies are easier to implement, and they serve as the default sampling mechanism in many optimization methods for deep learning \cite{bengio2012practical,sun2020optimization}. In addition, the training loss from shuffling-type methods practically decreases faster than that from standard SGD \cite{bottou2009curiously, recht2013parallel}. There are three basic approaches to sampling without replacement.
\begin{enumerate}
    \item \textbf{Incremental Gradient (Cyclic).} The training data is processed cyclically with natural order over the given dataset. In other words, a deterministic permutation is fixed throughout all epochs during the training process.
    \item \textbf{Shuffle-Once (SO).} The training data is shuffled randomly only once before starting the training process.
    \item \textbf{Random Reshuffling (RR).} The training data is randomly reshuffled before starting any epoch.  
\end{enumerate}

Although implementing shuffling strategies is more efficient in practice than standard SGD, analyzing these shuffling methods is more challenging due to the lack of statistical independence. Recently, there have been a few studies about theoretical analyses \cite{nguyen2021unified, mishchenko2020random, ahn2020sgd, haochen2019random, safran2020good, rajput2020closing, cai2024tighter} showing that shuffling techniques can improve the convergence rates of SGD. Moreover, shuffling SGD can be combined with accelerating techniques such as momentum update \cite{tran2021smg, tran2022nesterov, tran2024shuffling, hu2024learning} to improve the convergence rates of shuffling SGD.
\vspace{-0.2cm}
\paragraph{\textbf{Variance Reduction with Shuffling.}} Combining shuffling strategies with variance reduction is a natural evolution in optimization, aiming to harness the practical speed of shuffling with the theoretical stability of VR. However, analyzing these methods is difficult due to the lack of independence in sampling without replacement. Recent complexity analysis results of these methods cannot match the complexities of some original algorithms that use sampling with replacement. In particular, the complexities of most of these methods are even worse than the complexity of GD. DIAG \cite{mokhtari2018surpassing} and Prox-DFinito \cite{huang2021improved} have the same complexity as GD, but they require the assumption of strong convexity for each function $f_i$. PIAG \cite{vanli2018global} can also achieve that complexity, but its analysis applies only to the cyclic technique. PIAG also requires storing $n$ past gradients for updates, while other methods like SVRG and SARAH do not. Shuffling SVRG in \cite{malinovsky2023random} can achieve the same complexity as GD without the assumption of strong convexity for each $f_i$, but it requires the Big data regime, which needs a large sample size $n$. Most recently, \cite{medyakov2025variance} successfully proved that Shuffling SVRG and Shuffling SARAH can match GD's total complexity (i.e., the number of individual gradient evaluations) of $\mathcal{O}(n\kappa \log(1/\varepsilon))$ in the strongly convex case and $\mathcal{O}(n/\varepsilon^2)$ in the non-convex case. Table \ref{tab:comparison} summarizes the computational complexities of those shuffling variance-reduced algorithms introduced above.
\vspace{-0.2cm}
\paragraph{\textbf{The Scalability Bottleneck.}} Despite these advances, a critical limitation remains. As shown in Table \ref{tab:comparison}, the ``best-known" complexity for shuffling VR methods still scales linearly with the dataset size $n$ (e.g., the $\mathcal{O}(n)$ term in gradient evaluations). In the modern large-scale regime where $n$ is massive, this linear dependence is inefficient. To the best of our knowledge, no shuffling VR algorithm has successfully decoupled its total complexity from $n$ to achieve a truly scalable rate comparable to pure stochastic methods. This background motivates us to answer the central question of the paper:
\\

\textit{Can we design a Shuffling Variance-Reduced algorithm that not only matches the best-known rates of GD in standard regimes but also achieves a total complexity independent of the dataset size $n$ in large-scale settings?}
\vspace{-0.2cm}

\paragraph{\textbf{Our Contributions.}} 
\begin{itemize}
    \item[$\bullet$] \textbf{Algorithm Design.} We introduce \textbf{Adjusted Shuffling SARAH}, a novel algorithm that integrates shuffling strategies into the recursive SARAH framework. A key feature of our method is a \textit{dynamic weighting mechanism} within the inner loop, which adjusts the influence of stochastic gradients to promote better exploration as the loop progresses. Crucially, we design the algorithm with two distinct operating modes to address different regimes: an \textbf{Exact Mode} (using full gradients) for high-precision convergence and an \textbf{Inexact Mode} (using mini-batch estimators) for scalability in massive datasets.
    \item[$\bullet$] \textbf{Theoretical Analysis and Scalability Trade-off.} We provide a comprehensive convergence analysis for both modes, highlighting the trade-off between convergence accuracy and computational scalability.
    \begin{itemize}
        \item In the \textbf{Exact Mode}, we prove that our algorithm matches the best-known gradient complexity of $\mathcal{O}(n\kappa\log(1/\varepsilon))$ for strongly convex functions and $\mathcal{O}(n/\varepsilon^2)$ for non-convex functions.
        \item In the \textbf{Inexact Mode}, we demonstrate a significant improvement 
        for large-scale settings. We prove that when $n$ is large, the linear dependence on $n$ can be replaced by a dependence on the target accuracy $\varepsilon$, yielding a total complexity independent of the dataset size $n$. Specifically, we establish complexities of $\mathcal{O}\left( \frac{\sigma^2}{\mu\varepsilon} \kappa \log \left( \frac{1}{\varepsilon}\right)\right)$ for strongly convex problems and $\mathcal{O}\left(\frac{\sigma^2L}{\varepsilon^4}\right)$ for non-convex problems. To the best of our knowledge, this is the first shuffling variance-reduced algorithm to achieve a total complexity independent of the dataset size $n$ when $n$ is large. This makes the \textbf{Inexact Mode} strictly superior to existing shuffling methods in the large-scale settings.
    \end{itemize}
    \item[$\bullet$] \textbf{Empirical Validation.} We validate the performance of Adjusted Shuffling SARAH through empirical evaluation on a strongly convex binary classification task using logistic regression. Our algorithm demonstrates competitive performance relative to other variance-reduced algorithms in terms of loss residuals and gradient norms.
\end{itemize}
\vspace{-0.5cm}
\begin{table}[h!]
    \renewcommand{\arraystretch}{1.6}
    \centering
    \caption{Comparisons of total complexity (the number of individual gradient evaluations) between different shuffling variance-reduced algorithms for finite-sum function in strongly convex and non-convex settings. Note that we ignore constant terms in all expressions. In this table, $n$ is the number of training samples, $L$ is the Lipschitz constant
    , $\mu$ is the constant of strong convexity of $P$
    , $\kappa := L/\mu$ is the condition number, and $\varepsilon$ is the accuracy of the solution i.e. (the solution $w$ satisfies $P(w) - P_* \leq \varepsilon$ in strongly convex case and $\lVert \nabla P(w) \rVert^2 \leq \varepsilon^2$ in non-convex case). $\sigma$ is an upper bound for the variance in Assumption~\ref{assum:3}.}
    \resizebox{\linewidth}{!}{
    \begin{tabular}{c c c c c c}
        \hline \hline
        \textbf{Algorithm} & \textbf{Strongly Convex} & \textbf{Non-convex} & \textbf{Sampling} & \textbf{Scope}\textcolor{red}{$^{(1)}$}& \textbf{Memory} \\
        \hline\hline
        IAG \cite{gurbuzbalaban2017convergence} & $n\kappa^2 \log\left( \frac{1}{\varepsilon}\right)$& -- & Cyclic & $P(w)$ &  \textcolor{red}{$dn$} \\
        \hline
        DIAG \cite{mokhtari2018surpassing} & $n\kappa \log\left( \frac{1}{\varepsilon}\right)$ & -- & Cyclic & \textcolor{red}{$f_i(w)$} &  \textcolor{red}{$dn$} \\
        \hline
        PIAG \cite{vanli2018global}& $n\kappa \log\left( \frac{1}{\varepsilon}\right)$ & -- & Cyclic & $P(w)$ &  \textcolor{red}{$dn$} \\
        \hline
        Cyclic-SAGA \cite{park2020linear} & $n\kappa^2 \log\left( \frac{1}{\varepsilon}\right)$& -- & Cyclic & \textcolor{red}{$f_i(w)$} &  \textcolor{red}{$dn$}\\
        \hline
        RR-SAGA \cite{sun2020optimization,ying2020variance} & $n\kappa^2 \log \left( \frac{1}{\varepsilon}\right)$ & -- & RR & $P(w)$ & \textcolor{red}{$dn$} \\
        \hline
        AVRG \cite{ying2020variance} & $n\kappa^2 \log \left( \frac{1}{\varepsilon}\right)$ & -- & RR & $P(w)$ & $d$ \\
        \hline
        Prox-DFinito \cite{huang2021improved}& $n\kappa \log \left( \frac{1}{\varepsilon}\right)$ & -- & RR & \textcolor{red}{$f_i(w)$} & \textcolor{red}{$dn$} \\
        \hline
        Shuffling SVRG \cite{malinovsky2023random} \textcolor{red}{$^{(2)}$} & $n\kappa^{3/2} \log\left( \frac{1}{\varepsilon}\right)$ & -- & Cyclic/SO/RR &  $P(w)$ & $d$ \\
        \hline
        Shuffling SVRG \cite{malinovsky2023random}\textcolor{red}{$^{(3)}$} & $n\kappa \sqrt{\frac{\kappa}{n}}  \log\left( \frac{1}{\varepsilon}\right)$ & $\frac{nL}{\varepsilon^2}$ & SO/RR & \textcolor{red}{$f_i(w)$} & $d$ \\
        \hline
        Shuffling SVRG \cite{malinovsky2023random}  \textcolor{red}{$^{(4)}$} & $n\kappa \log\left( \frac{1}{\varepsilon}\right)$ & $\frac{nL}{\varepsilon^2}$ & SO/RR & $P(w)$ & $d$ \\
        \hline
        Shuffling SVRG \cite{medyakov2025variance}& $n\kappa \log\left( \frac{1}{\varepsilon}\right)$ & $\frac{nL}{\varepsilon^2}$ & Cyclic/SO/RR & $P(w)$ & $d$ \\
        \hline
        Shuffling SARAH \cite{beznosikov2024random} & $(n\kappa + n^2\frac{\delta}{\mu}) \log\left( \frac{1}{\varepsilon}\right)$\textcolor{red}{$^{(5)}$} & -- & Cyclic/SO/RR & $P(w)$ & $d$ \\
        \hline
        Shuffling SARAH \cite{medyakov2025variance} & $n\kappa \log\left( \frac{1}{\varepsilon}\right)$ & $\frac{nL}{\varepsilon^2}$ & Cyclic/SO/RR & $P(w)$ & $d$ \\
        \hline
       \textcolor{blue}{Adjusted S-SARAH}& \textcolor{blue}{$\min \left \lbrace \frac{\sigma^2}{\mu\varepsilon}, n \right \rbrace \kappa \log \left( \frac{1}{\varepsilon}\right)$} & \textcolor{blue}{$\min \left \lbrace \frac{\sigma^2L}{\varepsilon^4}, \frac{nL}{\varepsilon^2} \right \rbrace$}  & \textcolor{blue}{RR}\textcolor{red}{$^{(6)}$} & \textcolor{blue}{$P(w)$} & \textcolor{blue}{$d$}
       \\
       \hline\hline
    \end{tabular}
    }
    \\
    \begin{flushleft}
    \footnotesize{
        \textcolor{red}{$^{(1)}$} The ``Scope'' column illustrates the scope of strong convexity, where $P(w)$ means the general strong convexity is applied for the general function and $f_i(w)$ means strong convexity is applied for each function $f_i$, for $i \in \lbrace 1, 2, ..., n\rbrace.$ \\
        \textcolor{red}{$^{(2)}$} General regime. \\
        \textcolor{red}{$^{(3)}$} First Big data regime: $n > \log (1-\delta^2)/\log(1-\gamma\mu)$. \\
        \textcolor{red}{$^{(4)}$} Second Big data regime: $n > 2\kappa/\left( 1-\frac{1}{\sqrt{2}\kappa} \right)$. \\
        \textcolor{red}{$^{(5)}$} The constant $\delta$ represents the similarity between each $f_i$ and $P$, i.e. for all $w \in \mathbb{R}^d$, we have $\lVert \nabla^2f_i(w) - \nabla^2 P(w) \rVert \leq \delta/2$.\\
        \textcolor{red}{$^{(6)}$} When $n$ is small, our method can accomplish these results with all three shuffling methods Cyclic, SO, and RR.
    }  
    \end{flushleft}
    \label{tab:comparison}
    \vspace{-0.5cm}
\end{table}

\paragraph{\textbf{Related Work.}} Let us briefly review the most related works to our results in this paper. 

Standard variance-reduced methods like SAG/SAGA \cite{roux2012stochastic, defazio2014saga}, SVRG \cite{johnson2013accelerating}, and SARAH \cite{nguyen2017sarah} have established a strong baseline for finite-sum optimization. In the strongly convex regime, these methods achieve a linear convergence rate with a total complexity of $\mathcal{O}((n+\kappa)\log (1/\varepsilon))$, significantly outperforming the sub-linear $\mathcal{O}(1/\varepsilon)$ rate of SGD. Extensions to non-convex settings also show improvements, with SVRG and SARAH achieving complexities of $\mathcal{O}(n + n^{2/3}/\varepsilon)$ and $\mathcal{O}(n + \sqrt{n}/\varepsilon)$, respectively \cite{allen2016improved, nguyen2017sarah, pham2020proxsarah}. While highly efficient, these standard analyses rely on sampling \emph{with replacement}, leaving open the question of whether practical shuffling strategies can offer further gains.

Independent of variance reduction, the theoretical properties of shuffling SGD have seen rapid development. For quadratic functions, shuffling improves the convergence rate of SGD from $\mathcal{O}(1/K)$ to $\mathcal{O}(1/K^2)$ or better (where $K$ is the number of epochs) \cite{gurbuzbalaban2019convergence, haochen2019random, rajput2020closing}. More recent work has extended these tighter bounds to general convex and non-convex smooth functions without relying on restrictive assumptions like bounded gradients \cite{ahn2020sgd, mishchenko2020random, nguyen2021unified, cha2023tighter, liu2024last, cai2025last}. These results confirm that shuffling fundamentally accelerates optimization, motivating its integration with variance reduction.

Combining shuffling with variance reduction has proven challenging. Early methods like IAG \cite{gurbuzbalaban2017convergence}, DIAG \cite{mokhtari2018surpassing}, and PIAG \cite{vanli2018global} achieved linear convergence but required either the strong convexity of individual component functions $f_i$ or high memory costs ($\mathcal{O}(nd)$). While recent algorithms like Shuffling SVRG~\cite{malinovsky2023random} and Shuffling SARAH \cite{beznosikov2024random} have relaxed these assumptions, they often struggle with scalability. For instance, while \cite{beznosikov2024random} proposes a version of Shuffling SARAH that avoids full gradient computations, its complexity scales with $\mathcal{O}(n^2)$ in certain regimes (see Table \ref{tab:comparison}). Similarly, the optimal rates matching GD derived in \cite{medyakov2025variance} still depend linearly on $n$. Our work addresses this gap, providing a unified framework that both matches the best-known exact rates and offers an inexact mode with complexity independent of $n$ for large-scale settings.
\vspace{-0.3cm}




\section{Adjusted Shuffling SARAH}\label{sec_main}

In this section, we introduce our new shuffling variance-reduced algorithm, \textbf{Adjusted Shuffling SARAH} (Algorithm~\ref{alg:adj_shuffling_sarah-main}). This framework unifies shuffling paradigms with SARAH \cite{nguyen2017sarah} and adapts to different computational regimes through a flexible inner loop size $m$. In this algorithm, we denote the outer-loop iterates (at epoch $s$) by $\Tilde{w}_s$ and the inner-loop iterates (within epoch $s$) by $w_t$. The sampling of the permutation $\pi_s$ operates in two distinct modes based on the inner loop size $m$:
\begin{itemize}
    \item[$\bullet$] \textbf{Exact Mode ($m=n$):} We choose a full permutation of $[n]$ based on standard shuffling methods (Cyclic, Shuffle-Once, or Random Reshuffling).
    \item[$\bullet$] \textbf{Inexact Mode ($m < n$):} We perform a random reshuffling of $[n]$ (or a subset thereof) and select the first $m$ elements. Here, $v_0$ serves as a stochastic approximation of the full gradient $\nabla P(\Tilde{w}_{s-1})$, similar to iSARAH \cite{nguyen2021inexact}.
\end{itemize}

\begin{algorithm}[H]
\caption{Adjusted Shuffling SARAH}\label{alg:adj_shuffling_sarah-main}
\begin{algorithmic}[1]
\STATE \textbf{Initialization}: Initial point $\Tilde{w}_0$, learning rate $\eta$, inner loop size $m \in \lbrace 1,\dots,n \rbrace$, number of epochs $S$.  
\STATE \textbf{Iterate}:
\FOR{$s = 1,2,\ldots, S$}
    \STATE Sample an $m$-element permutation $\pi_s = (\pi_s^1, \ldots, \pi_s^m)$ of $[n]$.
    \STATE Set $w_0 = \Tilde{w}_{s-1}$.
    \STATE Set $v_0 = \frac{1}{m} \sum_{i=1}^m \nabla f_{\pi^i_s} (w_0) \approx \nabla P(\Tilde{w}_{s-1})$.
    \STATE Update $w_1 = w_0 - \eta v_0$.
    
    \STATE \textbf{Iterate:}
    \FOR{$t = 1, \ldots, m$}
        \STATE Compute $v_t = \left( \frac{m+1}{m+1-t} \right)\left( \nabla f_{\pi_s^t} (w_t) - \nabla f_{\pi_s^t} (w_{t-1}) \right) + v_{t-1}$.
        \STATE Update $w_{t+1} = w_t - \eta v_t$. 
    \ENDFOR
    \STATE Set $\Tilde{w}_s = w_{m+1}$.
\ENDFOR
\end{algorithmic}
\end{algorithm}

There are two primary differences between our algorithm and the original SARAH. First, SARAH samples a random index $i_t$ uniformly with replacement from $[n]$ in every inner-loop iteration $t$ for the update:
$$v_t = \nabla f_{i_t}(w_t) - \nabla f_{i_t}(w_{t-1})+v_{t-1}.$$
In contrast, our algorithm uses predetermined indices from the permutation~$\pi_s$. Notably, using permutations is often easier to implement and more computationally efficient than sampling a random index with replacement in each inner-loop iteration, as it allows for sequential memory access. 

The second and most significant difference is the coefficient $\frac{m+1}{m+1-t}$ in the inner-loop update:
$$v_t = \left( \frac{m+1}{m+1-t} \right) (\nabla f_{\pi_s^t}(w_t)-\nabla f_{\pi_s^t}(w_{t-1}))+v_{t-1}.$$

Intuitively, as the algorithm proceeds toward the end of an inner loop ($t \to m$), the iterates $w_t$ and $w_{t-1}$ converge, causing the gradient difference $(\nabla f_{\pi_s^t}(w_t)-\nabla f_{\pi_s^t}(w_{t-1}))$ to approach zero. The coefficient $\left( \frac{m+1}{m+1-t} \right)$ grows as $t \to m$, leveraging this diminishing term to allow the algorithm to explore better solutions. A potential concern is that the term $\left( \frac{m+1}{m+1-t} \right)$ can become very large as $t \to m$, potentially causing the update $v_t$ to explode. This is resolved through a sufficient choice of learning rate $\eta$, as shown in Lemma \ref{lemma:v_t}. This lemma ensures that the norm of the update $v_t$ remains non-increasing.

Another advantage of this coefficient is that it enhances consistency across different shuffling techniques. Specifically, it ensures that every data point contributes equally to the total epoch update, regardless of its position in the permutation. Let $\Delta_m := \sum_{t=0}^m v_t$. We analyze this term in the following lemma.

\begin{lemma} \label{lemma:delta_m}
    Consider $v_t$ at iteration $s$ defined from Algorithm~\ref{alg:adj_shuffling_sarah-main}. Denote $\Delta_m = \sum_{t=0}^{m} v_t$. Then, we have
    $$\Delta_m = \sum_{t=0}^{m} v_t = (m+1)\sum_{t=1}^{m} (\nabla f_{\pi_s^t}(w_t) - \nabla f_{\pi_s^t}(w_{t-1}))+(m+1)v_0.$$
\end{lemma}

The proof is provided in the Appendix. This lemma indicates that each term $(\nabla f_{\pi_s^t}(w_t) - \nabla f_{\pi_s^t}(w_{t-1}))$ has the same weight of $(m+1)$ in $\Delta_m$. To see the importance of this, consider the ordinary shuffling version of SARAH (without adjusted weights), analyzed in \cite{beznosikov2024random}, where the cumulative update is:
\begin{align*}
    \sum_{t=0}^m v_t &= \sum_{t=1}^m (m+1-t)(\nabla f_{\pi_s^t}(w_t) - \nabla f_{\pi_s^t}(w_{t-1})) +(m+1)\nabla P(\Tilde{w}_{s-1}).
\end{align*}
In this version, earlier terms in the permutation have significantly higher weights than later terms. By balancing these weights, the Adjusted Shuffling SARAH algorithm achieves consistent convergence behavior regardless of the shuffling scheme used.

Finally, the parameter $m$ governs a critical trade-off between precision and efficiency. In the \textbf{Exact Mode} ($m=n$), the algorithm guarantees convergence to the exact solution but requires a conservative step size of $\eta \leq \frac{1}{2nL}$. In contrast, for large-scale settings where $n$ is massive, the \textbf{Inexact Mode} ($m \ll n$) allows for a significantly larger step size of $\eta \leq \frac{1}{4mL}$. While this mode only converges to a noise-dominated neighborhood of the solution, we will demonstrate that it achieves a better total complexity (total number of gradient evaluations) compared to the exact approach. This decouples the computational cost from the dataset size $n$, offering a practical path that is computationally superior for large-scale optimization. This trade-off is further analyzed theoretically in the next section.

\section{Theoretical Analysis for Adjusted Shuffling SARAH}

In this section, we investigate the convergence rate and the total complexity of Algorithm~\ref{alg:adj_shuffling_sarah-main} under strongly convex and non-convex settings. We begin by analyzing the algorithm in \textbf{Exact Mode} as a baseline, followed by the \textbf{Inexact Mode} to demonstrate the improvement in total complexity and scalability.
First, we explicitly introduce the assumptions used in this analysis. 

\begin{assumption}[$L-$smoothness] \label{assum:1} Each function $f_i: \mathbb{R}^d \to \mathbb{R}$, for $i \in [n]$, is $L-$smooth, i.e., there exists a constant $L>0$ such that for all $w,w' \in \mathbb{R}^d$, 
$$ \lVert \nabla f_i(w) - \nabla f_i(w')\rVert \leq L\lVert w-w' \rVert.$$
\end{assumption}

Assumption \ref{assum:1} implies that the average function $P$ is also $L$-smooth. This assumption is widely used in the literature for gradient-type methods in stochastic and deterministic settings. Next, let us define the assumption of strong convexity for the average function $P(w)$.

\begin{assumption}[$\mu-$strong convexity] \label{assum:2} The function $P: \mathbb{R}^d \to \mathbb{R}$ is $\mu-$strongly convex, i.e., there exists a constant $\mu>0$ such that for all $w,w' \in \mathbb{R}^d$,
$$P(w) \geq P(w') + \langle \nabla P(w'), w-w' \rangle + \frac{\mu}{2}\lVert w-w' \rVert^2.$$
\end{assumption}

Under this assumption, there exists a unique minimizer $w_\ast$ for the function $P$ with minimum value $P_* := P(w_\ast)$. In addition, we introduce the standard bounded variance assumption, which is often used in stochastic optimization~\cite{bottou2018optimization}.

\begin{assumption}[Standard bounded variance] \label{assum:3} For all $w \in \mathbb{R}^d$, there exists $\sigma \in (0,\infty)$ such that
$$\frac{1}{n} \sum_{i=1}^n  \lVert \nabla f_i(w) - \nabla P(w) \rVert^2  \leq \sigma^2.$$
\end{assumption}

We now proceed to the detailed convergence analysis of Algorithm~\ref{alg:adj_shuffling_sarah-main} in the following two subsections.

\subsection{Convergence analysis for Adjusted Shuffling SARAH under Exact Mode}
This subsection presents the analysis of the convergence rate and total complexity of Algorithm~\ref{alg:adj_shuffling_sarah-main} with the full-batch option ($m=n$). This analysis serves as a baseline for the mini-batch option.

We first establish the main lemma for proving the convergence rate of Algorithm~\ref{alg:adj_shuffling_sarah-main} in \textbf{Exact Mode}.

\begin{lemma} \label{lemma:main}
    Suppose that Assumption \ref{assum:1} holds. Consider Algorithm~\ref{alg:adj_shuffling_sarah-main} with batch size $m=n$ and step size $\eta \leq \frac{1}{2nL}$. Then,
    \begin{align*}
        P(\Tilde{w}_s) \leq P(\Tilde{w}_{s-1}) - \dfrac{\eta(n+1)}{2}(1-\eta^2 n^2 L^2) \lVert \nabla P(\Tilde{w}_{s-1}) \rVert^2.  
    \end{align*}
\end{lemma}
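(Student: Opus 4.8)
The plan is to analyze a single outer cycle of Algorithm~\ref{alg:rrsarah} by combining the descent inequality for the smooth function $P$ with the closed form for $\Delta=\sum_{t=0}^n v_t$ from Lemma~\ref{lemma:delta} and the monotonicity $\lVert v_t\rVert\le\lVert v_{t-1}\rVert$ from Lemma~\ref{lemma:v_t}; strong convexity (Assumption~\ref{assum:2}) is not needed for this step. Since Assumption~\ref{assum:1} implies $P$ is $L$-smooth and $\tilde{w}_s-\tilde{w}_{s-1}=-\eta\sum_{t=0}^n v_t=-\eta\Delta$, the descent lemma gives
\begin{align*}
P(\tilde{w}_s)\le P(\tilde{w}_{s-1})-\eta\langle\nabla P(\tilde{w}_{s-1}),\Delta\rangle+\tfrac{L\eta^2}{2}\lVert\Delta\rVert^2.
\end{align*}
Writing $g:=\sum_{t=1}^n\big(\nabla f_{\pi_s^t}(w_t)-\nabla f_{\pi_s^t}(w_{t-1})\big)$ and recalling $v_0=\nabla P(\tilde{w}_{s-1})$, Lemma~\ref{lemma:delta} reads $\Delta=(n+1)(v_0+g)$, so after expanding $\langle v_0,v_0+g\rangle$ and $\lVert v_0+g\rVert^2$ the right-hand side becomes a quadratic in $v_0$ and $g$.

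The one genuinely new estimate I would need is a bound on $g$. Using the $L$-smoothness of each $f_{\pi_s^t}$ together with $w_t-w_{t-1}=-\eta v_{t-1}$, each summand of $g$ has norm at most $L\eta\lVert v_{t-1}\rVert$, and Lemma~\ref{lemma:v_t} gives $\lVert v_{t-1}\rVert\le\lVert v_0\rVert$; the triangle inequality then yields $\lVert g\rVert\le nL\eta\lVert v_0\rVert=nL\eta\lVert\nabla P(\tilde{w}_{s-1})\rVert$. I would also record that $\beta:=L\eta(n+1)\le\frac{n+1}{2n}\le1$ for $n\ge1$ whenever $\eta\le\frac{1}{2nL}$. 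Collecting terms, the descent bound takes the form
\begin{align*}
P(\tilde{w}_s)-P(\tilde{w}_{s-1})\le\eta(n+1)\Big[\big(\tfrac{\beta}{2}-1\big)\lVert v_0\rVert^2+(\beta-1)\langle v_0,g\rangle+\tfrac{\beta}{2}\lVert g\rVert^2\Big].
\end{align*}

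To finish, I would use $\langle v_0,g\rangle\ge-\lVert v_0\rVert\lVert g\rVert$ (valid to apply since $\beta-1\le0$) together with $\lVert g\rVert\le nL\eta\lVert v_0\rVert$ in the cross term and the $\lVert g\rVert^2$ term, so that the bracket is at most $\lVert v_0\rVert^2$ times $\frac{\beta}{2}-1+(1-\beta)nL\eta+\frac{\beta}{2}n^2L^2\eta^2$. A short rearrangement shows this scalar equals $-\frac12(1-n^2L^2\eta^2)-\frac{1-\beta}{2}(1-nL\eta)^2$, and the second term is nonpositive because $\beta\le1$; multiplying back by $\eta(n+1)$ gives exactly $P(\tilde{w}_s)-P(\tilde{w}_{s-1})\le-\frac{\eta(n+1)}{2}(1-\eta^2 n^2 L^2)\lVert\nabla P(\tilde{w}_{s-1})\rVert^2$.

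The main obstacle is this last algebraic reduction: a cruder bound that additionally drops the weight $(1-\beta)$ in front of the cross term forces a stepsize like $\eta\le\frac{1}{(3n+1)L}$, which is smaller than the claimed $\frac{1}{2nL}$. Retaining the factor $(1-\beta)$ and recognizing the leftover quadratic in $nL\eta$ as the perfect square $\frac{1-\beta}{2}(1-nL\eta)^2$ is precisely what makes the threshold $\eta\le\frac{1}{2nL}$ (equivalently $\beta\le1$) sufficient. A secondary point to watch is that the coefficients $\frac{n+1}{n+1-t}$ must never be summed term by term — doing so would leak a spurious $\log n$ factor — so I would route the entire argument through Lemma~\ref{lemma:delta}, where all the weights are the uniform constant $n+1$.
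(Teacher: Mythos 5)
Your proposal is correct, and it reaches the paper's exact constant and stepsize threshold, but it routes the final step differently. The paper combines two packaged ingredients: Lemma~\ref{lemma:dP_delta}, which is precisely your bound $\lVert g\rVert\le nL\eta\lVert \nabla P(\tilde{w}_{s-1})\rVert$ written in the form $\lVert \nabla P(\tilde{w}_{s-1})-\Delta/(n+1)\rVert^2\le \eta^2n^2L^2\lVert\nabla P(\tilde{w}_{s-1})\rVert^2$ (obtained from Lemma~\ref{lemma:delta}, $L$-smoothness and Lemma~\ref{lemma:v_t}, just as you do, only via Cauchy--Schwarz on squares rather than the triangle inequality), and Lemma~\ref{lemma:start}, imported from \citep{beznosikov2024random}, which already converts the descent lemma into $P(\tilde{w}_s)\le P(\tilde{w}_{s-1})-\frac{\eta(n+1)}{2}\lVert\nabla P(\tilde{w}_{s-1})\rVert^2+\frac{\eta(n+1)}{2}\lVert\nabla P(\tilde{w}_{s-1})-\Delta/(n+1)\rVert^2$ by the identity $-\langle a,b\rangle=\frac12(\lVert a-b\rVert^2-\lVert a\rVert^2-\lVert b\rVert^2)$ and dropping a nonpositive term under $\eta\le\frac{1}{(n+1)L}$; the lemma then follows by substitution. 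You instead expand the descent inequality directly in terms of $v_0$ and $g$, keep the weight $(1-\beta)$ on the cross term, and complete the square $-\frac{1-\beta}{2}(1-nL\eta)^2$; I checked the identity $\frac{\beta}{2}-1+(1-\beta)nL\eta+\frac{\beta}{2}n^2L^2\eta^2=-\frac12(1-n^2L^2\eta^2)-\frac{1-\beta}{2}(1-nL\eta)^2$ and it is correct, as are the sign conditions ($\beta\le 1$) needed to apply Cauchy--Schwarz with the right orientation. What your route buys is self-containedness — no appeal to the external Lemma~1 of \citep{beznosikov2024random} — at the cost of the extra algebraic bookkeeping; what the paper's route buys is brevity and a reusable intermediate inequality (Lemma~\ref{lemma:start}) that is applied again, in expectation form, for the inexact algorithm in Lemma~\ref{lemma:inexact_start}.
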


The proof for this lemma is provided in the Appendix. Based on this lemma, we prove the convergence rate in the case of strong convexity.

\begin{theorem}[Exact Mode - Strongly convex] \label{thm:sc}
    Suppose that Assumptions \ref{assum:1} and \ref{assum:2} hold. Consider Algorithm~\ref{alg:adj_shuffling_sarah-main} with $m=n$ and step size $\eta \leq \frac{1}{2nL}$. Then,
    $$P(\Tilde{w}_s) - P_\ast \leq \left( 1 - \dfrac{\eta(n+1)\mu}{2}\right)^s (P(\Tilde{w}_{0})-P_\ast).$$
\end{theorem}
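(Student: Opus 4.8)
The plan is to derive Theorem \ref{thm:sc} from Lemma \ref{lemma:main} by a straightforward one-step contraction argument combined with strong convexity, then iterate. First I would invoke Lemma \ref{lemma:main}, which, under Assumption \ref{assum:1} and the stepsize restriction $0 < \eta \leq \frac{1}{2nL}$, gives
\begin{align*}
    P(\Tilde{w}_s) - P_\ast \leq P(\Tilde{w}_{s-1}) - P_\ast - \frac{\eta(n+1)}{2}\left(1 - \eta^2 n^2 L^2\right)\lVert \nabla P(\Tilde{w}_{s-1})\rVert^2.
\end{align*}
The coefficient $1 - \eta^2 n^2 L^2$ is nonnegative (indeed at least $3/4$) precisely because $\eta \leq \frac{1}{2nL}$, so the gradient term is a genuine decrease. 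To convert the gradient norm into a function-value gap, I would use the standard Polyak--Łojasiewicz inequality implied by Assumption \ref{assum:2}: for a $\mu$-strongly convex $P$ with minimizer $w_\ast$, one has $\lVert \nabla P(w)\rVert^2 \geq 2\mu\,(P(w) - P_\ast)$ for all $w$. This is a classical consequence of strong convexity (minimize both sides of the strong convexity inequality over $w$), and I would either cite it or include the two-line derivation.

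Next I would combine these two facts. Substituting the PL bound at $w = \Tilde{w}_{s-1}$ into the Lemma \ref{lemma:main} estimate yields
\begin{align*}
    P(\Tilde{w}_s) - P_\ast \leq \left(1 - \eta(n+1)\mu\left(1 - \eta^2 n^2 L^2\right)\right)\left(P(\Tilde{w}_{s-1}) - P_\ast\right).
\end{align*}
Since $1 - \eta^2 n^2 L^2 \geq \frac{3}{4} > \frac{1}{2}$ under the stepsize condition, the contraction factor satisfies $1 - \eta(n+1)\mu(1-\eta^2 n^2 L^2) \leq 1 - \frac{\eta(n+1)\mu}{2}$; I should also note this factor lies in $[0,1)$ so that the recursion is a genuine contraction (nonnegativity of $1 - \frac{\eta(n+1)\mu}{2}$ follows because $\eta(n+1)\mu \leq \frac{(n+1)\mu}{2nL} \leq \frac{\mu}{L} \leq 1$). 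Hence
\begin{align*}
    P(\Tilde{w}_s) - P_\ast \leq \left(1 - \frac{\eta(n+1)\mu}{2}\right)\left(P(\Tilde{w}_{s-1}) - P_\ast\right).
\end{align*}

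Finally I would unroll this recursion over $s$ iterations, starting from $\Tilde{w}_0$, to obtain $P(\Tilde{w}_s) - P_\ast \leq \left(1 - \frac{\eta(n+1)\mu}{2}\right)^s (P(\Tilde{w}_0) - P_\ast)$, which is exactly the claimed bound. There is essentially no hard step here: the real work was already done in Lemma \ref{lemma:main}, and what remains is the PL inequality plus bookkeeping on the stepsize-dependent constants. The only point requiring a little care is verifying that the contraction factor is bounded above by $1 - \frac{\eta(n+1)\mu}{2}$ \emph{and} is nonnegative, so that both the inequality direction and the convergence interpretation are valid; both follow cleanly from $\eta \leq \frac{1}{2nL}$ and $\mu \leq L$. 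I would therefore present the proof in the order: (i) apply Lemma \ref{lemma:main}; (ii) state/derive the PL inequality from Assumption \ref{assum:2}; (iii) combine and bound the constants; (iv) iterate.
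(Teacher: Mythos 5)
Your proposal is correct and follows essentially the same route as the paper's proof: apply Lemma \ref{lemma:main}, use the strong-convexity (PL) bound $\lVert \nabla P(\Tilde{w}_{s-1})\rVert^2 \geq 2\mu(P(\Tilde{w}_{s-1})-P_\ast)$, bound the factor $1-\eta^2 n^2 L^2$ from below using $\eta \leq \frac{1}{2nL}$, and unroll the resulting one-step contraction. Your constant $3/4$ is slightly tighter than the paper's $1/2$, and your check that the contraction factor is nonnegative is a harmless extra detail; otherwise the arguments coincide.
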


\begin{proof}
    Since $P$ is $\mu-$strongly convex, we have $$\lVert \nabla P(\Tilde{w}_{s-1}) \rVert^2 \geq 2 \mu (P(\Tilde{w}_{s-1})-P_\ast).$$
    From Lemma \ref{lemma:main}, taking $\eta \leq \dfrac{1}{2nL}$, we have 
    \begin{align*}
        P(\Tilde{w}_s) &\leq P(\Tilde{w}_{s-1}) - \dfrac{\eta(n+1)}{2}(1-\eta^2 n^2 L^2) \lVert \nabla P(\Tilde{w}_{s-1}) \rVert^2 \\
        &\leq P(\Tilde{w}_{s-1}) - \eta(n+1)\mu(1-\eta^2 n^2 L^2) (P(\Tilde{w}_{s-1})-P_\ast).
    \end{align*}
    For $0< \eta \leq \dfrac{1}{2nL}$, it follows that $\eta^2 n^2 L^2 \leq \frac{1}{4}$, so $(1- \eta^2 n^2 L^2) \geq \frac{3}{4} > \frac{1}{2}$. However, to keep the bound simple and consistent with literature, we use the looser bound $1- \eta^2 n^2 L^2 \geq \frac{1}{2}$.
    Then, we have
    \begin{align*}
        P(\Tilde{w}_s) - P_\ast &\leq P(\Tilde{w}_{s-1}) - P_\ast - \frac{\eta(n+1)\mu}{2}(P(\Tilde{w}_{s-1})-P_\ast) \\
        &= \left( 1 - \dfrac{\eta(n+1)\mu}{2}\right)(P(\Tilde{w}_{s-1})-P_\ast).
    \end{align*}
    Recursively applying this inequality for $s$ iterations yields the result.
    \endproof
\end{proof}

From this theorem, we derive the following corollary regarding the linear convergence rate and total complexity of Algorithm~\ref{alg:adj_shuffling_sarah-main} in \textbf{Exact Mode}.

\begin{corollary} \label{col:sc}
    Fix $\varepsilon \in (0,1)$, and let us run Algorithm~\ref{alg:adj_shuffling_sarah-main} with $\eta = \frac{1}{2nL}$. Then, we can obtain an $\varepsilon-$approximate solution (such that $P(\Tilde{w}_s)-P_\ast \leq \varepsilon$) after $\mathcal{O}\left( \kappa \log \left( \dfrac{1}{\varepsilon}\right)\right)$ iterations. Consequently, the total complexity (number of gradient evaluations) is $\mathcal{O}\left( n \kappa \log \left(\dfrac{1}{\varepsilon}\right)\right)$.
\end{corollary}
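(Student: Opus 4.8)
The plan is to feed the choice $\eta = \tfrac{1}{2nL}$ into the linear rate of Theorem~\ref{thm:sc} and then run the standard ``unroll the geometric contraction and take logarithms'' argument. Substituting $\eta = \tfrac{1}{2nL}$ into the contraction factor gives
\[
1 - \frac{\eta(n+1)\mu}{2} \;=\; 1 - \frac{(n+1)\mu}{4nL} \;=\; 1 - \frac{n+1}{4n}\cdot\frac{1}{\kappa}.
\]
First I would check that this lies strictly in $(0,1)$: since $\kappa = L/\mu \geq 1$ and $\tfrac{n+1}{4n} \leq \tfrac{1}{2}$ for $n\geq 1$, the subtracted quantity is at most $\tfrac{1}{2\kappa}\leq\tfrac12$, so the factor is a genuine contraction; and using $\tfrac{n+1}{n}\geq 1$ it is bounded above by $1 - \tfrac{1}{4\kappa}$.

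Next I would iterate Theorem~\ref{thm:sc} to obtain $P(\Tilde{w}_s) - P_\ast \leq \bigl(1 - \tfrac{1}{4\kappa}\bigr)^s (P(\Tilde{w}_0) - P_\ast)$ and require the right-hand side to be $\leq \varepsilon$. Taking logarithms and using the elementary bound $\log(1-x)\leq -x$ for $x\in(0,1)$ with $x = \tfrac{1}{4\kappa}$, it suffices to take
\[
s \;\geq\; 4\kappa \,\log\!\left(\frac{P(\Tilde{w}_0) - P_\ast}{\varepsilon}\right),
\]
i.e.\ $s = \mathcal{O}\!\left(\kappa\log\tfrac{1}{\varepsilon}\right)$ outer iterations (absorbing the constant factor $P(\Tilde{w}_0)-P_\ast$ into the $\mathcal{O}$-notation as is standard). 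For the total complexity I would count individual gradient evaluations per epoch: line~6 of Algorithm~\ref{alg:rrsarah} computes the full gradient $\nabla P(\Tilde{w}_{s-1})$ at a cost of $n$ evaluations, and each of the $n$ inner iterations on line~10 evaluates $\nabla f_{\pi_s^t}$ at the two points $w_t$ and $w_{t-1}$, for $\mathcal{O}(n)$ more evaluations; hence each outer iteration costs $\mathcal{O}(n)$ individual gradient evaluations. Multiplying by the $\mathcal{O}(\kappa\log\tfrac1\varepsilon)$ epochs yields the claimed $\mathcal{O}\!\left(n\kappa\log\tfrac{1}{\varepsilon}\right)$ total complexity.

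As for the main obstacle: there is essentially none of substance, since all the real work is already packed into Lemma~\ref{lemma:main} and Theorem~\ref{thm:sc}. The only points requiring care are (i) verifying the contraction factor is positive so that the geometric-decay argument is valid — which is exactly where the restriction $\eta\leq\tfrac{1}{2nL}$, together with $\kappa\geq 1$, is used — and (ii) being slightly careful that the per-epoch cost is $\Theta(n)$: because the permutation index $\pi_s^t$ changes at every inner step, the gradient $\nabla f_{\pi_s^t}(w_{t-1})$ is in general not reusable from the previous iteration, so the factor-$2$ overhead on line~10 is real, but it is harmless inside $\mathcal{O}(\cdot)$.
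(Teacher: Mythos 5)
Your proposal is correct and follows essentially the same route as the paper: substitute $\eta = \tfrac{1}{2nL}$ into Theorem~\ref{thm:sc}, bound the contraction factor by $1-\tfrac{1}{4\kappa}$, and unroll the geometric decay using $1-x \leq e^{-x}$ to get $s = \mathcal{O}\left(\kappa\log\tfrac{1}{\varepsilon}\right)$ epochs, each costing $\mathcal{O}(n)$ gradient evaluations. The only cosmetic difference is that you fold the initial gap $P(\Tilde{w}_0)-P_\ast$ into the logarithm (absolute accuracy) while the paper states the guarantee relative to $P(\Tilde{w}_0)-P_\ast$, and you make the per-epoch gradient count explicit where the paper leaves it implicit.
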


\begin{proof}
    For $\eta = \dfrac{1}{2nL}$, from Theorem \ref{thm:sc}, we have
    \begin{align*}
        P(\Tilde{w}_s) - P_\ast &\leq \left( 1 - \dfrac{(n+1)\mu}{4nL}\right)^s (P(\Tilde{w}_{0})-P_\ast) \leq \left( 1 - \frac{1}{4 \kappa} \right)^s (P(\Tilde{w}_{0})-P_\ast),
    \end{align*}
    where $\kappa = \frac{L}{\mu}$. To ensure $P(\Tilde{w}_s) - P_\ast \leq \varepsilon (P(\Tilde{w}_{0})-P_\ast)$, we require $\left(1-\frac{1}{4\kappa}\right)^s \leq \varepsilon$.
    Using the inequality $1-x \leq e^{-x}$, it suffices to choose $s$ such that $e^{-s/4\kappa} \leq \varepsilon$, which implies $s \geq 4\kappa \log \left( \frac{1}{\varepsilon}\right)$.
    Thus, $s \in \mathcal{O}\left( \kappa \log \left( \frac{1}{\varepsilon}\right) \right)$. Since each epoch requires $n$ gradient evaluations, the total complexity is $\mathcal{O}\left( n\kappa \log \left(\frac{1}{\varepsilon}\right)\right)$.
\end{proof}

Next, we present the convergence rate and total complexity of Algorithm~\ref{alg:adj_shuffling_sarah-main} in the \textbf{Exact Mode} under the non-convex setting.
\begin{theorem}[Exact Mode - Non-convex] \label{thm:nc-alg1}
    Suppose that Problem~(\ref{prb:main}) has a minimum value $P_\ast$ and Assumption \ref{assum:1} holds. Consider Algorithm~\ref{alg:adj_shuffling_sarah-main} with $m=n$ and $\eta \leq \frac{1}{2nL}$. Then, we have
    $$\frac{1}{S}\sum_{s=0}^{S-1} \lVert \nabla P(\Tilde{w}_s) \rVert^2 \leq \dfrac{2}{\eta (n+1) (1-\eta^2n^2L^2)S}(P(\Tilde{w}_0)-P_\ast).$$
\end{theorem}
\begin{proof}
    Rearranging the result from Lemma \ref{lemma:main}, we obtain:
    $$\dfrac{\eta(n+1)}{2}(1-\eta^2 n^2 L^2) \lVert \nabla P(\Tilde{w}_{s-1}) \rVert^2 \leq P(\Tilde{w}_{s-1}) - P(\Tilde{w}_s).$$
    Since $\eta \leq \dfrac{1}{2nL}$, $(1-\eta^2 n^2 L^2) > 0$. Summing from $s = 1$ to $S$ and dividing by $S$ yields the result.
\end{proof}
This sublinear convergence rate is comparable to the result in \cite{malinovsky2023random} for the non-convex case. Following Theorem \ref{thm:nc-alg1}, we derive the total complexity.
\begin{corollary} \label{col:fullbatch-non-convex}
    Under the general non-convex setting, fix $\varepsilon \in (0,1)$, and run Algorithm~\ref{alg:adj_shuffling_sarah-main} with $\eta = \frac{1}{2nL}$. Then, we obtain an $\varepsilon^2-$approximate stationary point (i.e., $\frac{1}{S} \sum_{s=0}^{S-1} \lVert \nabla P(\Tilde{w}_{s-1}) \rVert^2 < \varepsilon^2$) after $\mathcal{O}\left( \dfrac{L}{\varepsilon^2} \right)$ iterations. The total complexity is $\mathcal{O}\left( \dfrac{nL}{\varepsilon^2} \right)$.
\end{corollary}
In the next subsection, we analyze Algorithm~\ref{alg:adj_shuffling_sarah-main} in \textbf{Inexact Mode} to illustrate the trade-off between accuracy and scalability.

\subsection{Convergence analysis for Adjusted Shuffling SARAH under Inexact Mode}
For the analysis of Algorithm~\ref{alg:adj_shuffling_sarah-main} in \textbf{Inexact Mode} ($m < n$), we exclusively use the Random Reshuffling paradigm (sampling without replacement). The proofs follow the same structure as the exact case, with the addition of the Standard bounded variance assumption.

\begin{lemma} \label{lemma:inexact_main}
    Suppose that Assumptions \ref{assum:1} and \ref{assum:3} hold. Consider Algorithm~\ref{alg:adj_shuffling_sarah-main} with $0< \eta \leq \dfrac{1}{4mL}$. Then,
    \begin{align*}
        \mathbb{E}[P(\Tilde{w}_s)] &\leq \mathbb{E}[P(\Tilde{w}_{s-1})] -\frac{\eta}{2}(m+1)(1-4m^2L^2\eta^2)\mathbb{E}\left[ \lVert \nabla P(\Tilde{w}_{s-1}) \rVert^2 \right]+\frac{3\sigma^2}{Lm}.
    \end{align*}
\end{lemma}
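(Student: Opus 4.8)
\textbf{Proof proposal for Lemma \ref{lemma:inexact_main}.}

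The plan is to mimic the structure of the proof of Lemma \ref{lemma:main} (the exact case), with the two extra ingredients being (i) control of the error $v_0 - \nabla P(\Tilde w_{s-1})$ coming from the mini-batch initialization, and (ii) careful tracking of where $n$ gets replaced by $m$ in the combinatorial identity of Lemma \ref{lemma:delta}. First I would record the analogue of Lemma \ref{lemma:delta} for Algorithm \ref{alg:inexact_rr_sarah}: since the inner loop now has $m$ steps and the coefficient is $\frac{m+1}{m+1-t}$, summing $v_t$ telescopes to
\begin{align*}
\Delta := \sum_{t=0}^m v_t = (m+1)\sum_{t=1}^m\big(\nabla f_{\pi_s^t}(w_t)-\nabla f_{\pi_s^t}(w_{t-1})\big) + (m+1)\, v_0,
\end{align*}
and the update gives $\Tilde w_s - \Tilde w_{s-1} = -\eta\Delta$. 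The key point is that $\frac{1}{m}\sum_{t=1}^m \nabla f_{\pi_s^t}(w_{t-1})$ is \emph{not} quite $\nabla P(\Tilde w_{s-1})$ on two counts — the $w_{t-1}$ are moving, and even at $w_0$ the $m$-sample average is only an estimate of the full gradient — so I would write $\Delta = (m+1)\big[\nabla P(\Tilde w_{s-1}) + (v_0 - \nabla P(\Tilde w_{s-1})) + \sum_{t=1}^m(\nabla f_{\pi_s^t}(w_t)-\nabla f_{\pi_s^t}(w_{t-1}))\big]$ and treat the last two bracketed terms as perturbations.

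Next I would apply $L$-smoothness of $P$ (implied by Assumption \ref{assum:1}) in the form $P(\Tilde w_s) \le P(\Tilde w_{s-1}) + \langle \nabla P(\Tilde w_{s-1}), \Tilde w_s - \Tilde w_{s-1}\rangle + \frac{L}{2}\|\Tilde w_s - \Tilde w_{s-1}\|^2$. Substituting $\Tilde w_s - \Tilde w_{s-1} = -\eta(m+1)[\,\cdots\,]$, the inner product produces the main descent term $-\eta(m+1)\|\nabla P(\Tilde w_{s-1})\|^2$ plus cross terms with the two perturbations, which I would bound via Cauchy–Schwarz / Young's inequality, absorbing half of the descent term. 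For the perturbation $\sum_{t=1}^m(\nabla f_{\pi_s^t}(w_t)-\nabla f_{\pi_s^t}(w_{t-1}))$ I would use $\|\nabla f_{\pi_s^t}(w_t)-\nabla f_{\pi_s^t}(w_{t-1})\| \le L\|w_t-w_{t-1}\| = \eta L\|v_{t-1}\| \le \eta L\|v_0\|$ by Lemma \ref{lemma:v_t} (whose analogue holds here under $\eta \le \frac{1}{2mL}$, hence under $\eta\le\frac{1}{4mL}$), so the whole sum is $\le m\eta L\|v_0\|$; the quadratic term $\frac{L}{2}\eta^2(m+1)^2\|\Delta/(m+1)\|^2$ is handled similarly. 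This is exactly how the factor $(1-4m^2L^2\eta^2)$ arises (the $4$ replacing the $1$ from Lemma \ref{lemma:main} because the inexactness forces a more conservative split). Finally, for the mini-batch error I would take expectations and invoke Assumption \ref{assum:3}: sampling $m$ indices without replacement gives $\E\|v_0 - \nabla P(\Tilde w_{s-1})\|^2 \le \frac{\sigma^2}{m}$ (or with an $\frac{n-m}{n-1}$ factor, which is $\le 1$), and $\E\|v_0\|^2 \le 2\|\nabla P(\Tilde w_{s-1})\|^2 + \frac{2\sigma^2}{m}$; pushing these through the cross terms and collecting the $\sigma^2$ contributions yields a bound of the form $\frac{c\,\eta\sigma^2}{m}\cdot(\text{something})$, which under $\eta\le\frac{1}{4mL}$ simplifies to $\frac{3\sigma^2}{Lm}$.

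The main obstacle I anticipate is bookkeeping the constants so that the two error sources combine into exactly $\frac{3\sigma^2}{Lm}$ under the stated step size $\eta\le\frac{1}{4mL}$ — in particular, deciding how much of the $-\eta(m+1)\|\nabla P\|^2$ descent to spend on the $v_0$-error cross term versus keeping it inside the $(1-4m^2L^2\eta^2)$ factor, and making sure the $\|v_0\|^2$ bound's $\|\nabla P(\Tilde w_{s-1})\|^2$ part does not spoil the coefficient of the retained descent term. A secondary subtlety is justifying the without-replacement variance bound $\E\|v_0-\nabla P(\Tilde w_{s-1})\|^2\le \sigma^2/m$ cleanly from Assumption \ref{assum:3}; I would state it as a short auxiliary fact (standard for sampling without replacement, and in any case dominated by the with-replacement bound) rather than re-deriving it in line.
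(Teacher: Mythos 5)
Your proposal follows essentially the same route as the paper's proof: the telescoped identity $\Delta_m=(m+1)\sum_{t=1}^m(\nabla f_{\pi_s^t}(w_t)-\nabla f_{\pi_s^t}(w_{t-1}))+(m+1)v_0$, a descent-type inequality in terms of $\bigl\lVert \nabla P(\Tilde w_{s-1})-\Delta_m/(m+1)\bigr\rVert^2$ (which the paper imports from \citep{beznosikov2024random} rather than re-deriving from smoothness as you do), the monotonicity $\lVert v_t\rVert\le\lVert v_0\rVert$, the without-replacement variance bound $\E\lVert v_0-\nabla P(\Tilde w_{s-1})\rVert^2\le\frac{n-m}{m(n-1)}\sigma^2\le\sigma^2/m$, and the split $\E\lVert v_0\rVert^2\le 2\lVert\nabla P(\Tilde w_{s-1})\rVert^2+2\sigma^2/m$ producing the factor $(1-4m^2L^2\eta^2)$ and, with $\eta\le\frac{1}{4mL}\le\frac{1}{(m+1)L}$, the additive term $\frac{3\sigma^2}{Lm}$. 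The constant bookkeeping you flag is exactly what the paper's Lemmas \ref{lemma:inexact_dP_delta} and \ref{lemma:inexact_main} carry out, and it closes as you anticipate.
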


The proof is included in the Appendix. From this lemma, we derive the convergence rate for the strongly convex setting.

\begin{theorem}[Inexact Mode - Strongly convex] \label{thm:inexact_sc}
    Suppose that Assumptions \ref{assum:1}, \ref{assum:2}, and \ref{assum:3} hold. Consider Algorithm~\ref{alg:adj_shuffling_sarah-main} with $0<\eta \leq \frac{1}{4mL}$. Then,
\begin{align*}
    \mathbb{E}[(P(\Tilde{w}_s)-P_\ast)] \leq \left( 1- \frac{ \eta (m+1)\mu}{2}\right)\mathbb{E}[(P(\Tilde{w}_{s-1})-P_\ast)] +\frac{3\sigma^2}{Lm}.
\end{align*}
\end{theorem}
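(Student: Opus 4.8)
The plan is to derive Theorem~\ref{thm:inexact_sc} directly from Lemma~\ref{lemma:inexact_main} by combining the strong-convexity inequality with the learning-rate restriction, mirroring the argument already used in the proof of Theorem~\ref{thm:sc}. First I would invoke Assumption~\ref{assum:2}, which gives the standard consequence $\lVert \nabla P(\Tilde{w}_{s-1}) \rVert^2 \geq 2\mu (P(\Tilde{w}_{s-1}) - P_\ast)$; since this holds pointwise I can take expectations to get $\mathbb{E}[\lVert \nabla P(\Tilde{w}_{s-1}) \rVert^2] \geq 2\mu\, \mathbb{E}[P(\Tilde{w}_{s-1}) - P_\ast]$. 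Plugging this lower bound into the negative term on the right-hand side of Lemma~\ref{lemma:inexact_main} yields
\begin{align*}
    \mathbb{E}[P(\Tilde{w}_s)] \leq \mathbb{E}[P(\Tilde{w}_{s-1})] - \eta(m+1)\mu(1-4m^2L^2\eta^2)\,\mathbb{E}[P(\Tilde{w}_{s-1}) - P_\ast] + \frac{3\sigma^2}{Lm}.
\end{align*}

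Next I would control the factor $1 - 4m^2L^2\eta^2$. Since $0 < \eta \leq \frac{1}{4mL}$, we have $\eta \leq \frac{1}{2\sqrt{2}\,mL}$, hence $4m^2L^2\eta^2 \leq 4m^2L^2 \cdot \frac{1}{8m^2L^2} = \frac{1}{2}$, so $1 - 4m^2L^2\eta^2 \geq \frac{1}{2}$. Substituting this bound gives
\begin{align*}
    \mathbb{E}[P(\Tilde{w}_s)] \leq \mathbb{E}[P(\Tilde{w}_{s-1})] - \frac{\eta(m+1)\mu}{2}\,\mathbb{E}[P(\Tilde{w}_{s-1}) - P_\ast] + \frac{3\sigma^2}{Lm}.
\end{align*}
Subtracting $P_\ast$ from both sides and grouping the terms involving $\mathbb{E}[P(\Tilde{w}_{s-1}) - P_\ast]$ produces exactly
\begin{align*}
    \mathbb{E}[P(\Tilde{w}_s) - P_\ast] \leq \left(1 - \frac{\eta(m+1)\mu}{2}\right)\mathbb{E}[P(\Tilde{w}_{s-1}) - P_\ast] + \frac{3\sigma^2}{Lm},
\end{align*}
which is the claimed bound.

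This argument is essentially routine once Lemma~\ref{lemma:inexact_main} is in hand, so I do not anticipate a serious obstacle at the level of Theorem~\ref{thm:inexact_sc} itself; the only minor point to be careful about is that the contraction factor $1 - \frac{\eta(m+1)\mu}{2}$ lies in $(0,1)$, which follows because $\eta(m+1)\mu/2 \leq \frac{(m+1)\mu}{8mL} \leq \frac{\mu}{4L} = \frac{1}{4\kappa} < 1$ (using $m+1 \leq 2m$ for $m\geq 1$), so the recursion is genuinely a contraction up to the additive noise term. The real work is upstream, in the proof of Lemma~\ref{lemma:inexact_main}: there one must handle the bias introduced by replacing $\nabla P(\Tilde{w}_{s-1})$ with the mini-batch estimator $v_0 = \frac{1}{m}\sum_{i=1}^m \nabla f_{\pi_s^i}(w_0)$, which is where Assumption~\ref{assum:3} enters and generates the $\frac{3\sigma^2}{Lm}$ term; but that lemma is assumed as given here. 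One could also remark that iterating this recursion (as in a later corollary) yields a linear rate down to a neighborhood of size $\mathcal{O}(\sigma^2/(L m \cdot \eta(m+1)\mu))$, which motivates choosing $m$ large enough — or $\eta$ and $m$ jointly — to drive the residual below $\varepsilon$.
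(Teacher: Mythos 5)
Your proposal is correct and follows essentially the same route as the paper's own proof: apply the strong-convexity inequality $\lVert \nabla P(\Tilde{w}_{s-1})\rVert^2 \geq 2\mu(P(\Tilde{w}_{s-1})-P_\ast)$ inside Lemma~\ref{lemma:inexact_main}, use $\eta \leq \frac{1}{4mL} \leq \frac{1}{2\sqrt{2}mL}$ to bound $1-4m^2L^2\eta^2 \geq \frac{1}{2}$, and subtract $P_\ast$ to obtain the recursion. Your added check that the contraction factor lies in $(0,1)$ is a harmless extra observation not needed for the stated bound.
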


\begin{proof}
    Using strong convexity ($\lVert \nabla P(\Tilde{w}_{s-1}) \rVert^2 \geq 2 \mu (P(\Tilde{w}_{s-1})-P_\ast)$) and Lemma~\ref{lemma:inexact_main}:
    \begin{align*}
        \mathbb{E}[P(\Tilde{w}_s)] &\leq \mathbb{E}[P(\Tilde{w}_{s-1})]-\eta(m+1)(1-4m^2L^2\eta^2)\mu\mathbb{E}\left[(P(\Tilde{w}_{s-1})-P_\ast) \right]+\frac{3\sigma^2}{Lm}.
    \end{align*}
    For $\eta \leq \dfrac{1}{4mL}$, we have $1 - 4\eta^2 m^2L^2 \geq 1 - \frac{4}{16} = \frac{3}{4} > \frac{1}{2}$.
    Then, 
    \begin{align*}
      \mathbb{E}[P(\Tilde{w}_s)-P_\ast] \leq \left( 1- \frac{ \eta (m+1)\mu}{2}\right)\mathbb{E}[(P(\Tilde{w}_{s-1})-P_\ast)] +\frac{3\sigma^2}{Lm}.
    \end{align*}
\end{proof}

Finally, we derive the total complexity of the \textbf{Inexact Mode}.

\begin{corollary} \label{col:inexact_sc}
    Fix $\varepsilon \in (0,1)$. Run Algorithm~\ref{alg:adj_shuffling_sarah-main} with step size $\eta~=~\frac{1}{4mL}$ and $m~=~\mathcal{O}\left( \min \left \lbrace \frac{\sigma^2}{\mu\varepsilon}, n \right \rbrace \right)$. Then, we obtain an $\varepsilon-$approximate solution after $\mathcal{O}\left( \kappa \log \left(\frac{1}{\varepsilon}\right)\right)$ iterations, with a total complexity of
    $\mathcal{O}\left( \min \left \lbrace \frac{\sigma^2}{\mu\varepsilon}, n \right \rbrace \kappa\log \left(\frac{1}{\varepsilon}\right)\right)$.
\end{corollary}

\begin{proof}
If $n$ is small (specifically $n \leq \frac{96\sigma^2}{\mu \varepsilon} -1$), we set $m = n$, recovering the Exact Mode (Corollary \ref{col:sc}) with total complexity $\mathcal{O}\left( n\kappa\log \left(\dfrac{1}{\varepsilon}\right)\right)$.

Now, consider $n > \frac{96\sigma^2}{\mu \varepsilon} -1$. We will choose $m = \frac{96\sigma^2}{\mu \varepsilon} -1$ later. Let $C = \dfrac{3\sigma^2}{Lm}$ and $\alpha = \left( 1- \frac{ \eta (m+1)\mu}{2}\right)$. From Theorem \ref{thm:inexact_sc}, 
\begin{align*}
    \mathbb{E}[(P(\Tilde{w}_s)-P_\ast)] &\leq \alpha \mathbb{E}[(P(\Tilde{w}_{s-1})-P_\ast)] + C \\
    &\leq \alpha^2 \mathbb{E}[(P(\Tilde{w}_{s-2})-P_\ast)] + (1+\alpha)C \\
    &\leq \alpha^s \mathbb{E}[(P(\Tilde{w}_0)-P_\ast)] + \left(1+\alpha+\ldots+\alpha^{s-1} \right)C\\
    &= \alpha^s \mathbb{E}[(P(\Tilde{w}_0)-P_\ast)] + \dfrac{1-\alpha^s}{1-\alpha} C \\
    &\leq \alpha^s \mathbb{E}[(P(\Tilde{w}_0)-P_\ast)] + \dfrac{C}{1-\alpha}
    \quad \text{(Since $0 < \alpha^s < 1$)}\\
    &= \alpha^s \mathbb{E}[(P(\Tilde{w}_0)-P_\ast)] + \frac{2}{\eta (m+1)\mu} \cdot \dfrac{3\sigma^2}{Lm}\\
    &= \alpha^s \mathbb{E}[(P(\Tilde{w}_0)-P_\ast)] + \dfrac{6\sigma^2}{\eta (m+1) \mu L m}.
\end{align*}
We set $\dfrac{6\sigma^2}{\eta (m+1) \mu L m} = \dfrac{\varepsilon}{4}$ and $\alpha^s \mathbb{E}[(P(\Tilde{w}_0)-P_\ast)] \leq \dfrac{3\varepsilon}{4}$. Also, choose $\eta~=~\frac{1}{4mL}$. Then we have $m = \dfrac{96\sigma^2}{\mu \varepsilon} = \mathcal{O}\left( \dfrac{\sigma^2}{\mu \varepsilon}  \right).$ Also, we can achieve $\varepsilon-$accuracy after $\mathcal{O}\left( \kappa\log \left(\dfrac{1}{\varepsilon}\right)\right)$ iterations. The proof is similar to that of Corollary \ref{col:sc}. 

Therefore, from both cases, the total complexity is 
$$\mathcal{O}\left( m\cdot \kappa \log \left( \dfrac{1}{\varepsilon}\right)\right) = \mathcal{O}\left( \min \left \lbrace \frac{\sigma^2}{\mu\varepsilon}, n \right \rbrace \kappa\log \left(\dfrac{1}{\varepsilon}\right)\right).$$
\end{proof}

Corollary \ref{col:inexact_sc} highlights the superior scalability of the Inexact Mode. When the dataset size $n$ is massive (specifically $n > \frac{96\sigma^2}{\mu \varepsilon}$), the total complexity becomes independent of $n$. This makes Algorithm~\ref{alg:adj_shuffling_sarah-main} significantly more efficient than full-batch methods for large-scale optimization. Now, we come to the analysis for non-convex case.
\begin{theorem}[Inexact Mode - Non-convex] \label{thm:nc-alg2}
    Assume that Problem~(\ref{prb:main}) has a minimum value $P_\ast$. Let Assumptions~\ref{assum:1} and \ref{assum:3} hold. Consider Algorithm~\ref{alg:adj_shuffling_sarah-main} with $\eta \leq \frac{1}{4mL}$. Then,
    \begin{align*}
     \frac{1}{S}\sum_{s=0}^{S-1}\mathbb{E}\left[ \lVert \nabla P(\Tilde{w}_{s}) \rVert^2 \right] 
     \leq \dfrac{2 \mathbb{E}[P(\Tilde{w}_0) - P_\ast]}{\eta(m+1)(1-4m^2L^2\eta^2)S} + \dfrac{6\sigma^2}{\eta(m+1)(1-4m^2L^2\eta^2)Lm}.
    \end{align*}
\end{theorem}
\begin{proof}
    For $\eta \leq \frac{1}{4mL}$, proceeding similarly to Theorem \ref{thm:nc-alg1} using Lemma \ref{lemma:inexact_main}:
    \begin{align*}
        \sum_{s=0}^{S-1}\mathbb{E}\left[ \lVert \nabla P(\Tilde{w}_{s}) \rVert^2 \right] &\leq \dfrac{2}{\eta(m+1)(1-4m^2L^2\eta^2)} \left( \mathbb{E}[P(\Tilde{w}_0)] - \mathbb{E}[P(\Tilde{w}_S)] + \frac{3\sigma^2 S}{Lm}\right) \\
         &\leq \dfrac{2}{\eta(m+1)(1-4m^2L^2\eta^2)} \left( \mathbb{E}[P(\Tilde{w}_0) - P_\ast] + \frac{3\sigma^2 S}{Lm}\right).
    \end{align*}
    Dividing by $S$ yields the result.
\end{proof}
\begin{corollary} \label{col:inexact_non-convex-complexity}
    Fix $\varepsilon \in (0,1)$, and run Algorithm~\ref{alg:adj_shuffling_sarah-main} with step size $\eta = \frac{1}{4mL}$ and $m = \mathcal{O}\left( \min \left \lbrace \frac{\sigma^2}{\varepsilon^2}, n \right \rbrace \right)$. We obtain an $\varepsilon^2-$approximate stationary point after $\mathcal{O}\left( \frac{L}{\varepsilon^2}\right)$ iterations, with total complexity
    $\mathcal{O}\left( \min \left \lbrace \frac{\sigma^2L}{\varepsilon^4},   \frac{nL}{\varepsilon^2}\right \rbrace \right)$.
\end{corollary}
As shown in Corollaries \ref{col:inexact_sc} and \ref{col:inexact_non-convex-complexity}, the total complexity of Algorithm~\ref{alg:adj_shuffling_sarah-main} decouples from $n$ when $n$ is very large. This demonstrates the excellent scalability of the \textbf{Inexact Mode} compared to the \textbf{Exact Mode}. Moreover, to the best of our knowledge, this is the first algorithm among similar shuffling variance-reduced methods to achieve total complexities independent of sample size $n$ in the large-scale regime. In the next section, we will verify the performance of the Adjusted Shuffling SARAH algorithm compared to other recent shuffling variance-reduced algorithms.
\vspace{-0.3cm}




\section{Numerical Experiments}\label{sec_experiment}

In this section, we investigate the performance of our method in logistic regression for binary classification. Let us formally define the (strongly convex) logistic regression problem with $l_2$-regularization as
$$\underset{w \in \mathbb{R}^d}{\min} \left \lbrace P(w) := \frac{1}{n}\sum_{i=1}^n \log(1+\exp(-y_i x_i^\top w)) + \frac{\lambda}{2}\lVert w\rVert^2 \right \rbrace,$$
where $\lbrace (x_i,y_i) \rbrace_{i=1}^n$ is set of training samples with objects $x_i \in \mathbb{R}^d$ and labels $y_i \in \lbrace -1,1 \rbrace$. 

In this experiment, we use three classification datasets: \texttt{w8a} (49,749 samples) and \texttt{ijcnn1} (91,701 samples) from LIBSVM library \cite{chang2011libsvm} and \texttt{fashion-mnist} (60,000 samples) \cite{xiao2017fashion}. We set the regularization parameter $\lambda = 0.01$. We run the experiment repeatedly with random seeds 10 times and record the average results with confidence intervals.

In Figure \ref{fig:logistic_results}, we compare our method (which is labeled as Adj\_RR\_SARAH) with two other shuffling variance-reduced methods: the shuffling version of SVRG (labeled as RR\_SVRG) \cite{malinovsky2023random} and the ordinary shuffling version of SARAH (labeled as RR\_SARAH) \cite{beznosikov2024random}. We apply the randomized reshuffling (RR) scheme to all methods to have a fair comparison. We note that the number of effective passes is the number of epochs (i.e., the number of data passes) in the process. In each plot, we show the results starting from epoch $10$ because this better highlights the tiny differences between all methods. For each dataset and method, we use the grid search to find the optimal fixed learning rate from the set $\lbrace 1, 0.5, 0.1, 0.05, 0.01, 0.005, 0.001 \rbrace$. 

\begin{figure}[h!]
    \centering
    \includegraphics[width=1\linewidth]{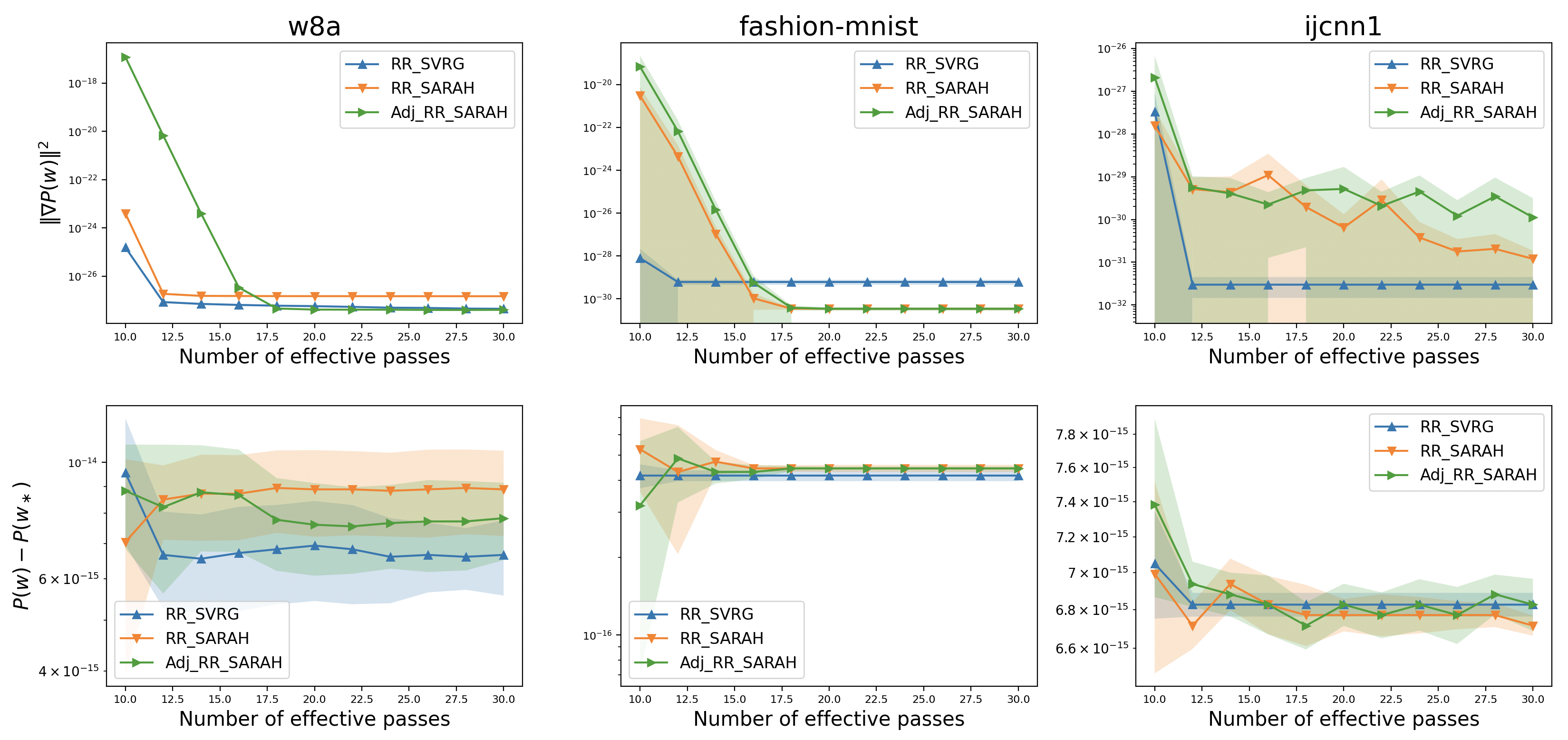}
    \caption{\textbf{(Logistic Regression)} Comparisons of the squared norm of gradient $\lVert \nabla P(w) \rVert^2$ (top) and the loss residuals $P(w) - P(w_\ast)$ (bottom) produced by randomized reshuffling variance-reduced methods (SVRG, SARAH, and Adjusted Shuffling SARAH) for \texttt{w8a}, \texttt{fashion-mnist}, and \texttt{ijcnn1} datasets, respectively. The number of effective passes is the number of epochs (i.e., the number of data passes) in the process.}
    \label{fig:logistic_results}
\end{figure}
\vspace{-0.2cm}
\textbf{Results.} 
As we can see from Figure~\ref{fig:logistic_results}, our algorithm generally performs similarly to the others in the long term across all experiments. In the \texttt{w8a} and \texttt{fashion-mnist} datasets, Adjusted Shuffling SARAH initially decreases the squared gradient norm at a slower rate compared to the other methods. However, after 20 epochs, it achieves the smallest gradient norm, demonstrating superior long-term convergence. Notably, in the \texttt{w8a} dataset, the variance across methods is relatively small, making confidence intervals less visible in the plot. In the \texttt{ijcnn1} dataset, Adjusted Shuffling SARAH maintains comparable performance throughout the optimization process, with the squared gradient norm reaching small values around $10^{-26}$ to~$10^{-30}$.

Regarding the loss residuals, no significant difference is observed in performance across all datasets. For \texttt{w8a}, Adjusted Shuffling SARAH achieves better results than RR\_SARAH but does not outperform RR\_SVRG. Meanwhile, for the \texttt{fashion-mnist} and \texttt{ijcnn1} datasets, all three methods exhibit similar behavior.

Overall, these results demonstrate that Adjusted Shuffling SARAH achieves competitive performance compared to other reshuffling variance-reduced methods. While it may initially converge more slowly in certain cases, it ultimately reaches comparable accuracy.




\section{Conclusion and Future Work}\label{sec_conclusion}
In this paper, we introduce Adjusted Shuffling SARAH, which integrates shuffling strategies into the SARAH framework via a dynamic weighting mechanism. We analyze two distinct modes to address the trade-off between convergence precision and computational scalability. While the \textbf{Exact Mode} matches state-of-the-art shuffling complexities, our most significant contribution is the \textbf{Inexact Mode} tailored for massive datasets. By decoupling the total complexity from the sample size $n$, achieving $\mathcal{O}(\frac{\sigma^2\kappa}{\mu \varepsilon}\log(1/\varepsilon))$ and $\mathcal{O}(\frac{\sigma^2L}{\varepsilon^4})$ for strongly convex and non-convex settings, respectively, this mode offers a strictly superior alternative for large-scale optimization.

While our work advances the theory of shuffling variance reduction, a gap remains when compared with uniform-sampling methods. Although our \textbf{Exact Mode} matches the best known rates for shuffling-based algorithms, it is still unclear whether shuffling methods can reach the optimal rates achieved by standard SVRG and SARAH. Understanding whether this gap can be closed under standard assumptions is an important direction for future work.



\newpage
\appendix  


\section{Appendix}
In this part, we provide the full proof of all the main lemmas introduced in the main part of the paper, including Lemmas \ref{lemma:delta_m}, \ref{lemma:main}, and \ref{lemma:inexact_main}. Before that, we present the details of Lemma~\ref{lemma:v_t} that is discussed in Section \ref{sec_main}.

\begin{lemma} \label{lemma:v_t}
    Consider $v_t$ at outer iteration $s$ defined from Algorithm \ref{alg:adj_shuffling_sarah-main}. Suppose that Assumption \ref{assum:1} holds. If  $0 < \eta \leq \frac{1}{4mL}$, then 
    $$\lVert v_t\rVert \leq \lVert v_{t-1} \rVert.$$
\end{lemma}
\begin{proof}
    For $t \in \lbrace 1, \ldots, m \rbrace$ from Algorithm \ref{alg:adj_shuffling_sarah-main}, we have
    \begin{align*}
    \lVert v_t \rVert^2 &= \left \lVert v_{t-1} - \left( \frac{m+1}{m+1-t} \right)\left( \nabla f_{\pi_s^t} (w_{t-1}) - \nabla f_{\pi_s^t} (w_{t}) \right) \right \rVert^2 \\
    &= \lVert v_{t-1} \rVert^2 - \frac{2(m+1)}{m+1-t} \left \langle \nabla f_{\pi_s^t} (w_{t-1}) - \nabla f_{\pi_s^t} (w_{t}), v_{t-1}\right \rangle \\
    &\,\,+ \left \lVert \left( \frac{m+1}{m+1-t} \right) (\nabla f_{\pi_s^t} (w_{t-1}) - \nabla f_{\pi_s^t} (w_{t}) )\right \rVert^2 \\
    &= \lVert v_{t-1} \rVert^2 - \frac{2}{\eta}\cdot \frac{(m+1)}{m+1-t} \left \langle \nabla f_{\pi_s^t} (w_{t-1}) - \nabla f_{\pi_s^t} (w_{t}), w_{t-1} - w_{t}\right \rangle \\
    &\,\,+ \left \lVert \left( \frac{m+1}{m+1-t} \right) (\nabla f_{\pi_s^t} (w_{t-1}) - \nabla f_{\pi_s^t} (w_{t}) )\right \rVert^2.
    \end{align*}
    Since $f_{\pi_s^t}$ is $L-$smooth, then
    $$\left \langle \nabla f_{\pi_s^t} (w_{t-1}) - \nabla f_{\pi_s^t} (w_{t}), w_{t-1} - w_{t}\right \rangle \geq \frac{1}{L}\lVert \nabla f_{\pi_s^t} (w_{t-1}) - \nabla f_{\pi_s^t} (w_{t}) \rVert^2.$$
    Thus,
    \begin{align*}
    \lVert v_t \rVert^2 &\leq \lVert v_{t-1} \rVert^2 - \frac{2}{\eta}\cdot \frac{(m+1)}{m+1-t} \cdot \frac{1}{L} \lVert \nabla f_{\pi_s^t} (w_{t-1}) - \nabla f_{\pi_s^t} (w_{t}) \rVert^2 \\
    &+ \left \lVert \left( \frac{m+1}{m+1-t} \right) (\nabla f_{\pi_s^t} (w_{t-1}) - \nabla f_{\pi_s^t} (w_{t}) )\right \rVert^2   \\
    &= \lVert v_{t-1}\rVert^2 + \left( \frac{m+1}{m+1-t} \right)^2\left( 1- \frac{2}{\eta} \cdot \frac{(m+1-t)}{m+1} \cdot \frac{1}{L} \right) \left \lVert \nabla f_{\pi_s^t} (w_{t-1}) - \nabla f_{\pi_s^t} (w_{t}) \right \rVert^2.
    \end{align*}
    From that, take $\eta \leq \dfrac{1}{4mL} \leq \dfrac{2}{(m+1)L}$. Then, for all $t \in \lbrace 1, \ldots, n \rbrace$,
    $$1- \frac{2}{\eta} \cdot \frac{(m+1-t)}{m+1} \cdot \frac{1}{L} \leq 1- (m+1-t) \leq 1 - (m+1-m) =0.$$
    Therefore, for $\eta \leq \dfrac{1}{4mL}$, we have
    $$\lVert v_t\rVert^2 \leq \lVert v_{t-1} \rVert^2.$$
\end{proof}
Now, we prove Lemma~\ref{lemma:delta_m} in detailed as follows.

\begin{proof}[\textbf{Proof for Lemma~\ref{lemma:delta_m}}]
    To simplify $\Delta_m$, we will replace each $v_t$ step-by-step with the update in Algorithm \ref{alg:adj_shuffling_sarah-main} for $t$ from $m$ down to $1$, then eventually, there is only the term $v_0$ left.  First, we establish the replacement for $v_m$ and $v_{m-1}$ to see the mathematical pattern.
    \begin{align*}
        \Delta_m  &= \sum_{t=0}^{m} v_t \\
                &= v_{m} + \sum_{t=0}^{m-1} v_t \\
                &= (m+1)(\nabla f_{\pi_s^{m}}(w_{m}) - \nabla f_{\pi_s^{m}}(w_{m-1}))+v_{m-1} + v_{m-1}+ \sum_{t=0}^{m-2} v_t \\
                &= (m+1)(\nabla f_{\pi_s^{m}}(w_{m}) - \nabla f_{\pi_s^{m}}(w_{m-1})) + 2v_{m-1}+ \sum_{t=0}^{m-2} v_t \\
                &= (m+1)(\nabla f_{\pi_s^{m}}(w_{m}) - \nabla f_{\pi_s^{m}}(w_{m-1})) \\
                &+ 2\left( \frac{m+1}{m+1-(m-1)} (\nabla f_{\pi_s^{m-1}}(w_{m-1}) - \nabla f_{\pi_s^{m-1}}(w_{m-2})) + v_{m-2} \right) + \sum_{t=0}^{m-2} v_t
                \\
                &= (m+1)(\nabla f_{\pi_s^{m}}(w_{m}) - \nabla f_{\pi_s^{m}}(w_{m-1}))\\
                &+ (m+1)(\nabla f_{\pi_s^{m-1}}(w_{m-1}) - \nabla f_{\pi_s^{m-1}}(w_{m-2})) + 3v_{m-2} + \sum_{t=0}^{m-3} v_t.
    \end{align*}
    From these two replacement steps, we get rid of the terms $v_m$ and $v_{m-1}$ and increment the weight for $v_{m-2}$ to become $3v_{m-2}$. Then, by doing this process iteratively to $v_0$, we have
    \begin{align*}
        \Delta_m   &= (m+1)\sum_{t=1}^{m} (\nabla f_{\pi_s^t}(w_t) - \nabla f_{\pi_s^t} (w_{t-1}))+(m+1)v_0 .    
    \end{align*}
\end{proof}
Before proving Lemma \ref{lemma:main}, the major lemma for showing the convergence rate of Algorithm \ref{alg:adj_shuffling_sarah-main} in \textbf{Exact Mode}, we need to introduce two other Lemmas \ref{lemma:dP_delta} and \ref{lemma:start}.

\begin{lemma} \label{lemma:dP_delta}
    Consider $v_t$ at outer iteration $s$ defined from Algorithm \ref{alg:adj_shuffling_sarah-main} in \textbf{Exact Mode}. Suppose that Assumption \ref{assum:1} holds. Then, we have
    $$\left \lVert \nabla P(\Tilde{w}_{s-1}) - \dfrac{\Delta_n}{n+1} \right \rVert^2 \leq \eta^2 n^2 L^2 \lVert \nabla P(\Tilde{w}_{s-1}) \rVert^2.$$
\end{lemma}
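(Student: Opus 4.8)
\textbf{Proof proposal for Lemma \ref{lemma:dP_delta}.}

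The plan is to use the formula for $\Delta$ from Lemma \ref{lemma:delta} to rewrite $\nabla P(\Tilde{w}_{s-1}) - \frac{\Delta}{n+1}$ as a telescoping sum of gradient differences, and then bound that sum via $L$-smoothness. Starting from Lemma \ref{lemma:delta}, dividing by $n+1$ gives
$$\frac{\Delta}{n+1} = \sum_{t=1}^n \left( \nabla f_{\pi_s^t}(w_t) - \nabla f_{\pi_s^t}(w_{t-1}) \right) + \nabla P(\Tilde{w}_{s-1}),$$
so that
$$\nabla P(\Tilde{w}_{s-1}) - \frac{\Delta}{n+1} = -\sum_{t=1}^n \left( \nabla f_{\pi_s^t}(w_t) - \nabla f_{\pi_s^t}(w_{t-1}) \right).$$
Taking norms and applying the triangle inequality followed by Assumption \ref{assum:1} ($L$-smoothness of each $f_{\pi_s^t}$) yields
$$\left\lVert \nabla P(\Tilde{w}_{s-1}) - \frac{\Delta}{n+1} \right\rVert \leq \sum_{t=1}^n \lVert \nabla f_{\pi_s^t}(w_t) - \nabla f_{\pi_s^t}(w_{t-1}) \rVert \leq L \sum_{t=1}^n \lVert w_t - w_{t-1} \rVert.$$

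Next I would control each step length $\lVert w_t - w_{t-1}\rVert$. From the update rule, $w_t - w_{t-1} = -\eta v_{t-1}$ for $t \in \{1,\dots,n\}$, so $\lVert w_t - w_{t-1}\rVert = \eta \lVert v_{t-1}\rVert$. By Lemma \ref{lemma:v_t}, since $0 < \eta \leq \frac{1}{2nL}$, the sequence $\lVert v_t\rVert$ is nonincreasing, hence $\lVert v_{t-1}\rVert \leq \lVert v_0\rVert = \lVert \nabla P(\Tilde{w}_{s-1})\rVert$ for every $t$. Therefore
$$\sum_{t=1}^n \lVert w_t - w_{t-1}\rVert = \eta \sum_{t=1}^n \lVert v_{t-1}\rVert \leq \eta n \lVert \nabla P(\Tilde{w}_{s-1})\rVert.$$
Combining with the previous display gives
$$\left\lVert \nabla P(\Tilde{w}_{s-1}) - \frac{\Delta}{n+1} \right\rVert \leq \eta n L \lVert \nabla P(\Tilde{w}_{s-1})\rVert,$$
and squaring both sides produces exactly the claimed bound $\lVert \nabla P(\Tilde{w}_{s-1}) - \frac{\Delta}{n+1}\rVert^2 \leq \eta^2 n^2 L^2 \lVert \nabla P(\Tilde{w}_{s-1})\rVert^2$.

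The argument is essentially a chain of routine estimates, so there is no deep obstacle; the one place requiring care is making sure the hypotheses of Lemma \ref{lemma:v_t} are in force — namely that $\eta \leq \frac{1}{2nL}$, which is exactly the standing assumption here — so that the monotonicity $\lVert v_{t-1}\rVert \leq \lVert v_0\rVert$ can be invoked, and noting that $v_0 = \nabla P(\Tilde{w}_{s-1})$ by construction of Algorithm \ref{alg:rrsarah}. I would also double-check the index bookkeeping: the sum defining $\Delta$ runs over $t = 0,\dots,n$, the telescoped form has $n$ gradient-difference terms indexed $t=1,\dots,n$, and the step-length sum likewise has $n$ terms, each bounded by $\eta\lVert \nabla P(\Tilde{w}_{s-1})\rVert$, giving the factor $n$ (not $n+1$) in the final bound.
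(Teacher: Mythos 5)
Your proof is correct and follows essentially the same route as the paper's: expand $\Delta/(n+1)$ via Lemma \ref{lemma:delta} so the full-gradient terms cancel, bound the remaining telescoping sum of gradient differences by $L$-smoothness and the step lengths $\eta\lVert v_{t-1}\rVert$, and invoke the monotonicity of $\lVert v_t\rVert$ from Lemma \ref{lemma:v_t} (under $\eta\le \tfrac{1}{2nL}$, which you rightly flag as an implicit hypothesis of the lemma). The only cosmetic difference is that you apply the triangle inequality and square at the end, whereas the paper squares first and uses $\lVert\sum_{t=1}^n a_t\rVert^2\le n\sum_{t=1}^n\lVert a_t\rVert^2$; both yield the identical bound, and your index bookkeeping ($w_t-w_{t-1}=-\eta v_{t-1}$) is if anything more careful than the paper's.
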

\allowdisplaybreaks
\begin{proof}
    From Lemma \ref{lemma:delta_m}, we have
    \begin{align*}
     \left \lVert \nabla P(\Tilde{w}_{s-1}) - \dfrac{\Delta_n}{n+1} \right \rVert^2 &= \left \lVert \nabla P(\Tilde{w}_{s-1}) - \left( \sum_{t=1}^n (\nabla f_{\pi_s^t}(w_t) - \nabla f_{\pi_s^t}(w_{t-1}))+ \nabla P(\Tilde{w}_{s-1}) \right)  \right \rVert^2  \\
     &= \left \lVert \sum_{t=1}^n (\nabla f_{\pi_s^t}(w_t) - \nabla f_{\pi_s^t}(w_{t-1})) \right \rVert^2 \\
     &\leq n \sum_{t=1}^n \lVert \nabla f_{\pi_s^t}(w_t) - \nabla f_{\pi_s^t}(w_{t-1}) \rVert^2 \\
     &\leq n \sum_{t=1}^n L^2 \lVert w_t - w_{t-1}  \rVert^2 \quad \text{(Since each $f_{\pi_s^t}$ is $L-$smooth)} \\
     &= nL^2\eta^2 \sum_{t=1}^n \lVert v_t \rVert^2 \\
     &\leq nL^2\eta^2 \sum_{t=1}^n \lVert v_0 \rVert^2 \quad \text{(From Lemma \ref{lemma:v_t}, we have $\lVert v_t\rVert^2 \leq \lVert v_{t-1} \rVert^2$)} \\
     &=\eta^2 n^2 L^2 \lVert \nabla P(\Tilde{w}_{s-1}) \rVert^2. 
    \end{align*}
\end{proof}

\begin{lemma}[Lemma 1 in \cite{beznosikov2024random}] \label{lemma:start}
    Using $0 < \eta \leq \dfrac{1}{2nL} \leq  \dfrac{1}{(n+1)L}$, under Assumption \ref{assum:1},
    \begin{align}
        P(\Tilde{w}_s) \leq P(\Tilde{w}_{s-1})-\frac{\eta}{2}(n+1)\lVert \nabla P(\Tilde{w}_{s-1}) \rVert^2 + \frac{\eta}{2}(n+1) \left \lVert \nabla P(\Tilde{w}_{s-1}) - \dfrac{\Delta_n}{n+1} \right\rVert^2.
    \end{align}
\end{lemma}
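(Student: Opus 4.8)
\textbf{Proof proposal for Lemma \ref{lemma:start}.}

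The plan is to combine the $L$-smoothness of $P$ with the explicit one-epoch displacement formula $\Tilde{w}_s - \Tilde{w}_{s-1} = \eta \sum_{t=0}^n v_t = \eta \Delta$. First I would invoke $L$-smoothness of $P$ (which follows from Assumption \ref{assum:1} applied to the average): for any $w, w'$,
$$P(w) \leq P(w') + \langle \nabla P(w'), w - w'\rangle + \frac{L}{2}\|w - w'\|^2.$$
Applying this with $w = \Tilde{w}_s$, $w' = \Tilde{w}_{s-1}$, and substituting $\Tilde{w}_s - \Tilde{w}_{s-1} = -\eta \Delta$ gives
$$P(\Tilde{w}_s) \leq P(\Tilde{w}_{s-1}) - \eta \langle \nabla P(\Tilde{w}_{s-1}), \Delta\rangle + \frac{L\eta^2}{2}\|\Delta\|^2.$$

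Next I would rescale by writing $\Delta = (n+1)\cdot\frac{\Delta}{n+1}$, so the inner-product term becomes $-\eta(n+1)\langle \nabla P(\Tilde{w}_{s-1}), \frac{\Delta}{n+1}\rangle$ and the quadratic term becomes $\frac{L\eta^2(n+1)^2}{2}\|\frac{\Delta}{n+1}\|^2$. The key algebraic identity is the polarization-type rewriting
$$-\langle a, b\rangle = \frac{1}{2}\|a - b\|^2 - \frac{1}{2}\|a\|^2 - \frac{1}{2}\|b\|^2,$$
applied with $a = \nabla P(\Tilde{w}_{s-1})$ and $b = \frac{\Delta}{n+1}$. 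This converts $-\eta(n+1)\langle a, b\rangle$ into $\frac{\eta(n+1)}{2}\|a-b\|^2 - \frac{\eta(n+1)}{2}\|a\|^2 - \frac{\eta(n+1)}{2}\|b\|^2$. Collecting terms, the coefficient on $\|\frac{\Delta}{n+1}\|^2 = \|b\|^2$ becomes $-\frac{\eta(n+1)}{2} + \frac{L\eta^2(n+1)^2}{2} = \frac{\eta(n+1)}{2}\big(L\eta(n+1) - 1\big)$, which is nonpositive precisely when $\eta \leq \frac{1}{(n+1)L}$; since $\eta \leq \frac{1}{2nL} \leq \frac{1}{(n+1)L}$ for $n \geq 1$, this term can be dropped, leaving exactly the claimed inequality.

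The main obstacle — though it is more bookkeeping than difficulty — is making sure the rescaling by $(n+1)$ is done consistently so that the surviving $\|\nabla P(\Tilde{w}_{s-1})\|^2$ term carries coefficient $-\frac{\eta}{2}(n+1)$ and the error term $\|\nabla P(\Tilde{w}_{s-1}) - \frac{\Delta}{n+1}\|^2$ carries coefficient $+\frac{\eta}{2}(n+1)$, and that the discarded $\|\frac{\Delta}{n+1}\|^2$ term is genuinely nonpositive under the stated step-size bound. No probabilistic argument is needed here; the result is deterministic and holds pathwise for any permutation $\pi_s$, which is why it serves as the common starting point for all three shuffling schemes. (This is exactly Lemma 1 of \citep{beznosikov2024random}, so I would also note that the statement can simply be cited.)
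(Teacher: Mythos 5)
Your argument is correct and complete: $L$-smoothness of $P$ (implied by Assumption \ref{assum:1}), the substitution $\Tilde{w}_s - \Tilde{w}_{s-1} = -\eta\Delta$, the polarization identity $-\langle a,b\rangle = \tfrac12\|a-b\|^2 - \tfrac12\|a\|^2 - \tfrac12\|b\|^2$ with $b = \Delta/(n+1)$, and dropping the $\|\Delta/(n+1)\|^2$ term whose coefficient $\tfrac{\eta(n+1)}{2}\left(L\eta(n+1)-1\right)$ is nonpositive for $\eta \le \tfrac{1}{(n+1)L}$ (which holds since $\tfrac{1}{2nL} \le \tfrac{1}{(n+1)L}$ for $n\ge 1$) yields exactly the stated bound. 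The paper itself does not prove Lemma \ref{lemma:start}: it is imported verbatim as Lemma 1 of \citep{beznosikov2024random}, with only the remark that $L$-smoothness of $P$ suffices. So your contribution is a self-contained derivation of the cited result, by what is essentially the standard descent-lemma-plus-polarization argument used in that reference; it buys transparency (the reader sees that the inequality is deterministic, holds pathwise for any permutation, and uses nothing beyond smoothness of the average $P$), at no extra cost. One small caveat: your opening sentence repeats the paper's sign slip, writing $\Tilde{w}_s - \Tilde{w}_{s-1} = \eta\Delta$, whereas the update $w_{t+1} = w_t - \eta v_t$ gives $\Tilde{w}_s - \Tilde{w}_{s-1} = -\eta\Delta$; since your actual substitution uses the correct negative sign (and the sign is what produces the crucial $-\tfrac{\eta}{2}(n+1)\|\nabla P(\Tilde{w}_{s-1})\|^2$ term), this is a typo to fix rather than a gap.
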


In \cite{beznosikov2024random}, the algorithm is defined similarly, and this lemma only requires the condition of $L$-smoothness for the function $P(w)$. Thus, we reference this lemma to simplify our proof in this paper. From these lemmas, we prove Lemma \ref{lemma:main}.

\begin{proof}[\textbf{Proof for Lemma \ref{lemma:main}}]
    From Lemma \ref{lemma:dP_delta} and \ref{lemma:start}, for $0 < \eta \leq \dfrac{1}{2nL}$, we have
    \begin{align*}
         P(\Tilde{w}_s) &\leq P(\Tilde{w}_{s-1})-\frac{\eta}{2}(n+1)\lVert \nabla P(\Tilde{w}_{s-1}) \rVert^2 + \frac{\eta}{2}(n+1) \left \lVert \nabla P(\Tilde{w}_{s-1}) - \dfrac{\Delta_n}{n+1} \right\rVert^2 \\
         &\leq P(\Tilde{w}_{s-1})-\frac{\eta}{2}(n+1)\lVert \nabla P(\Tilde{w}_{s-1}) \rVert^2 + \frac{\eta}{2}(n+1) \eta^2 n^2 L^2 \lVert \nabla P(\Tilde{w}_{s-1}) \rVert^2 \\
         &= P(\Tilde{w}_{s-1}) - \dfrac{\eta(n+1)}{2}(1-\eta^2 n^2L^2) \lVert \nabla P(\Tilde{w}_{s-1}) \rVert^2.
    \end{align*}
\end{proof}

After showing the main lemma in \textbf{Exact Mode}, i.e. Lemma \ref{lemma:main}, we follow the same direction to prove Lemma \ref{lemma:inexact_main}, which is also the main lemma but for \textbf{Inexact Mode}. We first need three additional lemmas, including Lemmas \ref{lemma:inexact_start}, \ref{lemma:shuffling-variance}, and \ref{lemma:inexact_dP_delta}. 

\begin{lemma} \label{lemma:inexact_start}
    Suppose that Assumption \ref{assum:1} holds. With $\eta \leq \dfrac{1}{4mL} \leq \dfrac{1}{(m+1)L}$, we have
    \begin{align*}
       \mathbb{E}[P(\Tilde{w}_s)] \leq &\mathbb{E}[P(\Tilde{w}_{s-1})]-\frac{\eta}{2}(m+1)\mathbb{E}\left[ \lVert \nabla P(\Tilde{w}_{s-1}) \rVert^2 \right] \\
       &+ \frac{\eta}{2}(m+1)  \mathbb{E}\left[\left \lVert \nabla P(\Tilde{w}_{s-1}) - \dfrac{\Delta_m}{m+1} \right\rVert^2 \right].
    \end{align*}
\end{lemma}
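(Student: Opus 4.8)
The plan is to mirror the proof of Lemma \ref{lemma:start} (Lemma 1 in \citep{beznosikov2024random}), which relies only on $L$-smoothness of $P$, now carried out for Algorithm \ref{alg:inexact_rr_sarah}. First I would telescope the inner updates $w_{t+1} = w_t - \eta v_t$ over $t = 0, 1, \dots, m$, using $w_0 = \Tilde{w}_{s-1}$ and $\Tilde{w}_s = w_{m+1}$, to obtain
$$\Tilde{w}_s - \Tilde{w}_{s-1} = -\eta \sum_{t=0}^{m} v_t = -\eta \Delta_m,$$
where $\Delta_m = \sum_{t=0}^m v_t$ is the quantity of Lemma \ref{lemma:inexact_delta} (this is what the statement abbreviates as $\Delta$). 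Since Assumption \ref{assum:1} implies $P$ is $L$-smooth, the descent inequality gives
$$P(\Tilde{w}_s) \leq P(\Tilde{w}_{s-1}) - \eta \langle \nabla P(\Tilde{w}_{s-1}), \Delta_m \rangle + \frac{L\eta^2}{2}\lVert \Delta_m \rVert^2.$$

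Next I would write $g = \nabla P(\Tilde{w}_{s-1})$ and $d = \Delta_m/(m+1)$, so that the bound becomes
$$P(\Tilde{w}_s) \leq P(\Tilde{w}_{s-1}) - \eta(m+1)\langle g, d\rangle + \frac{L\eta^2 (m+1)^2}{2}\lVert d\rVert^2,$$
and then apply the polarization identity $-\langle g, d\rangle = \tfrac12\lVert g - d\rVert^2 - \tfrac12\lVert g\rVert^2 - \tfrac12\lVert d\rVert^2$. Collecting the $\lVert d\rVert^2$ terms, their combined coefficient is $\tfrac{\eta(m+1)}{2}\bigl(L\eta(m+1) - 1\bigr)$, which is nonpositive exactly because $\eta \leq \tfrac{1}{(m+1)L}$ — this is guaranteed by the hypothesis $\eta \leq \tfrac{1}{4mL} \leq \tfrac{1}{(m+1)L}$ (valid for $m \geq 1$). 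Dropping this nonpositive term leaves the pathwise inequality
$$P(\Tilde{w}_s) \leq P(\Tilde{w}_{s-1}) - \frac{\eta(m+1)}{2}\lVert \nabla P(\Tilde{w}_{s-1})\rVert^2 + \frac{\eta(m+1)}{2}\left\lVert \nabla P(\Tilde{w}_{s-1}) - \frac{\Delta_m}{m+1}\right\rVert^2.$$

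Finally I would take total expectation over the randomness (the $m$-element permutations $\pi_s$ and the history up to epoch $s$), which is immediate since the previous inequality holds for every realization; this yields the claimed bound. The proof is essentially routine: the only point demanding care is verifying the sign of the $\lVert d\rVert^2$ coefficient under the stated step size, and keeping the notation consistent between $\Delta$ and $\Delta_m = \sum_{t=0}^m v_t$. I would emphasize that, unlike the later lemmas, this step invokes neither Assumption \ref{assum:3} nor Lemma \ref{lemma:inexact_v_t}; those enter only when the error term $\lVert \nabla P(\Tilde{w}_{s-1}) - \Delta_m/(m+1)\rVert^2$ is subsequently estimated.
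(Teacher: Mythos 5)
Your proof is correct and is exactly the standard argument the paper relies on: the paper gives no independent proof of this lemma, simply noting it is "similar" to Lemma \ref{lemma:start}, which it in turn cites as Lemma 1 of \citep{beznosikov2024random}; your telescoping of $\Tilde{w}_s - \Tilde{w}_{s-1} = -\eta\Delta_m$, the descent inequality from $L$-smoothness of $P$, the polarization identity, and dropping the $\lVert \Delta_m/(m+1)\rVert^2$ term under $\eta \leq \tfrac{1}{(m+1)L}$ is precisely that argument, adapted correctly from $n$ to $m$, and taking expectation at the end is valid since the bound holds pathwise.
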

This lemma is similar to Lemma \ref{lemma:start}.

\begin{lemma} \label{lemma:shuffling-variance}
    With Assumption \ref{assum:3}, at each iteration $s$ from Algorithm \ref{alg:adj_shuffling_sarah-main}, we have
    \textcolor{black}{
    \begin{align}\label{approx_grad}
        \mathbb{E}\left[\left \lVert \nabla P(w_0) - v_0  \right \rVert^2 \right] \leq \dfrac{n-m}{m(n-1)}\sigma^2.
    \end{align} }
\end{lemma}

\begin{proof}
    Apply Lemma 1. in \cite{mishchenko2020random}:
    \begin{align*}
        \mathbb{E}\left[\left \lVert \nabla P(w_0) - v_0  \right \rVert^2 \right] &= \mathbb{E}\left[\left \lVert \nabla P(w_0) - \frac{1}{m} \sum_{i=1}^m \nabla f_{\pi_s^i} (w_0)  \right \rVert^2 \right] \\
        &\leq \dfrac{n-m}{m(n-1)}\left( \frac{1}{n} \sum_{i=1}^n \lVert \nabla f_i(w_0) - \nabla P(w_0) \rVert^2 \right) \\
        &\leq \dfrac{n-m}{m(n-1)}\sigma^2.
    \end{align*}
\end{proof}

\begin{lemma} \label{lemma:inexact_dP_delta}
    Consider $v_t$ at iteration $s$ defined from Algorithm \ref{alg:adj_shuffling_sarah-main}. Suppose that Assumptions \ref{assum:1} and \ref{assum:3} hold. Then, we have
    $$\mathbb{E}\left[\left \lVert \nabla P(\Tilde{w}_{s-1}) - \dfrac{\Delta_m}{m+1} \right \rVert^2 \right]\leq \textcolor{black}{2m^2}L^2\eta^2 \mathbb{E}\lVert v_0 \rVert^2 + \textcolor{black}{\frac{2\sigma^2}{m}}.$$
\end{lemma}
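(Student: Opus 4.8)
\textbf{Proof proposal for Lemma \ref{lemma:inexact_dP_delta}.} The plan is to mirror the argument used for Lemma \ref{lemma:dP_delta}, but now accounting for the fact that $v_0$ is only an approximation of $\nabla P(\Tilde{w}_{s-1})$ rather than an exact equality. First I would invoke Lemma \ref{lemma:inexact_delta}, which gives $\frac{\Delta_m}{m+1} = \sum_{t=1}^{m}(\nabla f_{\pi_s^t}(w_t) - \nabla f_{\pi_s^t}(w_{t-1})) + v_0$. Since $w_0 = \Tilde{w}_{s-1}$, subtracting this from $\nabla P(\Tilde{w}_{s-1})$ yields
\begin{align*}
\nabla P(\Tilde{w}_{s-1}) - \frac{\Delta_m}{m+1} = \bigl(\nabla P(w_0) - v_0\bigr) - \sum_{t=1}^{m}\bigl(\nabla f_{\pi_s^t}(w_t) - \nabla f_{\pi_s^t}(w_{t-1})\bigr).
\end{align*}
Applying $\|a+b\|^2 \le 2\|a\|^2 + 2\|b\|^2$ splits the squared norm into the ``approximation error'' part $2\|\nabla P(w_0) - v_0\|^2$ and the ``inner-loop drift'' part $2\|\sum_{t=1}^m (\nabla f_{\pi_s^t}(w_t) - \nabla f_{\pi_s^t}(w_{t-1}))\|^2$.

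For the drift term, I would use Cauchy--Schwarz in the form $\|\sum_{t=1}^m a_t\|^2 \le m\sum_{t=1}^m \|a_t\|^2$, then $L$-smoothness of each $f_{\pi_s^t}$ (Assumption \ref{assum:1}) to bound $\|\nabla f_{\pi_s^t}(w_t) - \nabla f_{\pi_s^t}(w_{t-1})\|^2 \le L^2\|w_t - w_{t-1}\|^2 = L^2\eta^2\|v_t\|^2$, and finally Lemma \ref{lemma:inexact_v_t} ($\|v_t\|^2 \le \|v_{t-1}\|^2 \le \cdots \le \|v_0\|^2$) to conclude $\sum_{t=1}^m\|v_t\|^2 \le m\|v_0\|^2$. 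This chain gives $\|\sum_{t=1}^m(\nabla f_{\pi_s^t}(w_t) - \nabla f_{\pi_s^t}(w_{t-1}))\|^2 \le m^2 L^2\eta^2\|v_0\|^2$, so its contribution after taking expectations is $2m^2 L^2 \eta^2\,\mathbb{E}\|v_0\|^2$.

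For the approximation-error term, I would take expectations and apply the preceding lemma (inequality \eqref{approx_grad}), which states $\mathbb{E}[\|\nabla P(w_0) - v_0\|^2] \le \frac{n-m}{m(n-1)}\sigma^2$; since $\frac{n-m}{n-1} \le 1$, this is at most $\frac{\sigma^2}{m}$, contributing $\frac{2\sigma^2}{m}$. Adding the two bounds yields exactly the claimed inequality. I do not anticipate a genuine obstacle here — the only point requiring a little care is the very first step: unlike the exact case of Lemma \ref{lemma:dP_delta}, the term $\nabla P(w_0) - v_0$ does not vanish, so one must peel it off with the triangle/Young inequality before the smoothness-and-descent estimates can be applied, which is precisely where the factors of $2$ (and hence the constants $2m^2$ and $2/m$) in the statement come from.
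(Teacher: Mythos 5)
Your proposal is correct and follows essentially the same route as the paper's proof: decompose via Lemma \ref{lemma:inexact_delta}, split off the approximation error $\nabla P(w_0)-v_0$ with $\lVert a+b\rVert^2 \le 2\lVert a\rVert^2 + 2\lVert b\rVert^2$, bound the drift term by Cauchy--Schwarz, $L$-smoothness, and Lemma \ref{lemma:inexact_v_t}, and bound the error term by \eqref{approx_grad} together with $\frac{n-m}{n-1}\le 1$. No gaps to report.
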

\begin{proof}
    From Lemma \ref{lemma:delta_m}, we have
    \begin{align*}
     &\mathbb{E}\left[\left \lVert \nabla P(\Tilde{w}_{s-1}) - \dfrac{\Delta_m}{m+1} \right \rVert^2 \right] \\&= \mathbb{E} \left[ \left \lVert \nabla P(\Tilde{w}_{s-1}) - \left(\sum_{t=1}^{m} (\nabla f_{\pi_s^t}(w_t) - \nabla f_{\pi_s^t}(w_{t-1}))+ v_0\right) \right \rVert^2  \right] \\
     &\leq \mathbb{E}\left[ \textcolor{black}{2}\left \lVert \sum_{t=1}^{m} (\nabla f_{\pi_s^t}(w_t) - \nabla f_{\pi_s^t}(w_{t-1})) \right \rVert^2 \right] + \textcolor{black}{2 \mathbb{E}\left[ \left \lVert \nabla P(w_0) - v_0  \right \rVert^2\right]} \text{ (Cauchy-Schwarz)} \\
     &\leq \textcolor{black}{2m} \sum_{t=1}^{m} \mathbb{E} \left[ \lVert \nabla f_{\pi_s^t}(w_t) - \nabla f_{\pi_s^t}(w_{t-1}) \rVert^2 \right]+ \textcolor{black}{\dfrac{2(n-m)}{m(n-1)}\sigma^2} \text{ (Cauchy-Schwarz and (\ref{approx_grad}))}\\
     &\leq \textcolor{black}{2m} \sum_{t=1}^{m} L^2 \mathbb{E} \left[ \lVert w_t - w_{t-1}  \rVert^2 \right] + \textcolor{black}{\dfrac{2(n-m)}{m(n-1)}\sigma^2} \quad\text{(Since each $f_{\pi_s^t}$ is $L-$smooth)} \\
     &= \textcolor{black}{2m}L^2\eta^2 \sum_{t=1}^{m} \mathbb{E} \lVert v_t \rVert^2 + \textcolor{black}{\dfrac{2(n-m)}{m(n-1)}\sigma^2}\\
     &\leq \textcolor{black}{2m}L^2\eta^2 \sum_{t=1}^{m} \mathbb{E}\lVert v_0 \rVert^2 + \textcolor{black}{\dfrac{2(n-m)}{m(n-1)}\sigma^2} \quad \text{(From Lemma \ref{lemma:v_t})} 
     \\
     &= \textcolor{black}{2m^2}L^2\eta^2 \mathbb{E}\lVert v_0 \rVert^2 + \textcolor{black}{\dfrac{2(n-m)}{m(n-1)}\sigma^2} \\
     &\leq \textcolor{black}{2m^2}L^2\eta^2 \mathbb{E}\lVert v_0 \rVert^2 + \textcolor{black}{\frac{2\sigma^2}{m}} \quad \left( \frac{n-m}{n-1} \leq 1 \right).
    \end{align*}
\end{proof}
After these lemmas, we will prove Lemma \ref{lemma:inexact_main} for Algorithm \ref{alg:adj_shuffling_sarah-main} in \textbf{Inexact Mode}.
\begin{proof}[\textbf{Proof for Lemma \ref{lemma:inexact_main}}]
    From Lemma \ref{lemma:inexact_dP_delta} and \ref{lemma:inexact_start}, for $\eta \leq \dfrac{1}{4mL}$, we have
    \begin{align*}
         &\mathbb{E}[P(\Tilde{w}_s)] \\&\leq \mathbb{E}[P(\Tilde{w}_{s-1})]-\frac{\eta}{2}(m+1)\mathbb{E}\left[ \lVert \nabla P(\Tilde{w}_{s-1}) \rVert^2 \right] + \frac{\eta}{2}(m+1)  \mathbb{E}\left[\left \lVert \nabla P(\Tilde{w}_{s-1}) - \dfrac{\Delta}{m+1} \right\rVert^2 \right] \\
         &\leq \mathbb{E}[P(\Tilde{w}_{s-1})]-\frac{\eta}{2}(m+1)\mathbb{E}\left[ \lVert \nabla P(\Tilde{w}_{s-1}) \rVert^2 \right] + \frac{\eta}{2}(m+1)\textcolor{black}{2m^2}L^2\eta^2 \mathbb{E}\lVert v_0 \rVert^2 + \frac{\eta}{2}(m+1)\textcolor{black}{\frac{2\sigma^2}{m}} \\
         &= \mathbb{E}[P(\Tilde{w}_{s-1})]-\frac{\eta}{2}(m+1)\mathbb{E}\left[ \lVert \nabla P(\Tilde{w}_{s-1}) \rVert^2 \right] \\
         &\,\, + \frac{\eta}{2}(m+1)\textcolor{black}{2m^2}L^2\eta^2 \mathbb{E}\left[\lVert v_0 - \nabla P(\Tilde{w}_{s-1}) + \nabla P(\Tilde{w}_{s-1}) \rVert^2 \right]+ \frac{\eta}{2}(m+1)\textcolor{black}{\frac{2\sigma^2}{m}} \\
         &\leq \mathbb{E}[P(\Tilde{w}_{s-1})]-\frac{\eta}{2}(m+1)\mathbb{E}\left[ \lVert \nabla P(\Tilde{w}_{s-1}) \rVert^2 \right] \\
         &\,\, + \frac{\eta}{2}(m+1)\textcolor{black}{2m^2}L^2\eta^2 \mathbb{E}\left[2\lVert v_0 - \nabla P(\Tilde{w}_{s-1})\rVert^2 
         + 2\lVert \nabla P(\Tilde{w}_{s-1}) \rVert^2 \right]\\& \,\,+ \frac{\eta}{2}(m+1)\textcolor{black}{\frac{2\sigma^2}{m}} \text{ (Cauchy-Schwarz)}\\
         & \leq \mathbb{E}[P(\Tilde{w}_{s-1})]-\frac{\eta}{2}(m+1)(1-4m^2L^2\eta^2)\mathbb{E}\left[ \lVert \nabla P(\Tilde{w}_{s-1}) \rVert^2 \right] \\
         &\,\, + \frac{\eta}{2}(m+1)\textcolor{black}{2m^2}L^2\eta^2 \mathbb{E}\left[2\lVert v_0 - \nabla P(\Tilde{w}_{s-1})\rVert^2\right]+ \frac{\eta}{2}(m+1)\textcolor{black}{\frac{2\sigma^2}{m}} \\
         & \leq \mathbb{E}[P(\Tilde{w}_{s-1})]-\frac{\eta}{2}(m+1)(1-4m^2L^2\eta^2)\mathbb{E}\left[ \lVert \nabla P(\Tilde{w}_{s-1}) \rVert^2 \right] \\
         &\,\, + \frac{\eta}{2}(m+1)\textcolor{black}{2m^2}L^2\eta^2 \dfrac{2(n-m)}{m(n-1)}\sigma^2+ \frac{\eta}{2}(m+1)\textcolor{black}{\frac{2\sigma^2}{m}} \quad \quad \text{ (Apply (\ref{approx_grad}))}
         \\
         &\leq \mathbb{E}[P(\Tilde{w}_{s-1})]-\frac{\eta}{2}(m+1)(1-4m^2L^2\eta^2)\mathbb{E}\left[ \lVert \nabla P(\Tilde{w}_{s-1}) \rVert^2 \right] \\
         &\,\,+ \frac{\eta}{2}(m+1)\textcolor{black}{2m^2}L^2\eta^2 \dfrac{2\sigma^2}{m}+ \frac{\eta}{2}(m+1)\textcolor{black}{\frac{2\sigma^2}{m}} \quad \left( \frac{n-m}{n-1} \leq 1 \right) \\
         &= \mathbb{E}[P(\Tilde{w}_{s-1})]-\frac{\eta}{2}(m+1)(1-4m^2L^2\eta^2)\mathbb{E}\left[ \lVert \nabla P(\Tilde{w}_{s-1}) \rVert^2 \right] + \frac{\eta}{2}(m+1)(\textcolor{black}{2m^2}L^2\eta^2+1) \dfrac{2\sigma^2}{m} \\
         &\leq \mathbb{E}[P(\Tilde{w}_{s-1})]-\frac{\eta}{2}(m+1)(1-4m^2L^2\eta^2)\mathbb{E}\left[ \lVert \nabla P(\Tilde{w}_{s-1}) \rVert^2 \right] \\
         &\,\,+ \frac{1}{2(m+1)L}(m+1)\left(\frac{\textcolor{black}{2m^2}L^2}{(m+1)^2L^2}+1\right) \dfrac{2\sigma^2}{m}  \quad \quad \left(\text{Since }\eta \leq \frac{1}{4mL} \leq \frac{1}{(m+1)L} \right) \\
         & \leq \mathbb{E}[P(\Tilde{w}_{s-1})]-\frac{\eta}{2}(m+1)(1-4m^2L^2\eta^2)\mathbb{E}\left[ \lVert \nabla P(\Tilde{w}_{s-1}) \rVert^2 \right]+\frac{3\sigma^2}{Lm}.
    \end{align*}
\end{proof}

\newpage
\bibliography{reference}
\bibliographystyle{plain}








\end{document}